\documentclass[12pt,draft]{amsart}
\usepackage{amsmath,amssymb,amsthm,bm}
\usepackage{graphicx,enumerate}
\usepackage{layout}

\setlength{\topmargin}{-30pt}
\setlength{\oddsidemargin}{-5pt}
\setlength{\evensidemargin}{-5pt}
\setlength{\textwidth}{470pt}
\setlength{\textheight}{690pt}

\theoremstyle{plain}
\newtheorem{theorem}{Theorem}
\newtheorem{lemma}[theorem]{Lemma}
\newtheorem{proposition}[theorem]{Proposition}
\newtheorem{corollary}[theorem]{Corollary}

\theoremstyle{definition}

\newtheorem*{acknowledgment}{Acknowledgment}
\newtheorem*{isomproblem}{Field isomorphism problem}
\theoremstyle{remark}

\newcommand{\bZ}{\mathbb{Z}}

\newcommand{\bQ}{\mathbb{Q}}


\title[Field isomorphism problem of $X^3+sX+s$ and related Thue equations]
{A note on the field isomorphism problem of $X^3+sX+s$\\
and related cubic Thue equations}
\author{Akinari Hoshi and Katsuya Miyake}

\thanks{This work was partially supported by Grant-in-Aid for Scientific
Research (C) 19540057 of Japan Society for the Promotion of Science and
Rikkyo University Special Fund for Research.}

\subjclass[2000]{Primary 11D25, 11D59, 11R16, 12F10}


\begin{document}
\maketitle
{\small
\begin{center}
Department of Mathematics, Rikkyo University,\\
3--34--1 Nishi-Ikebukuro Toshima-ku, Tokyo,
171--8501, Japan. \\E-mail: \texttt{hoshi@rikkyo.ac.jp}
\end{center}

\begin{center}
Department of Mathematics, School of Fundamental Science and Engineering,\\ Waseda University,
3--4--1 Ohkubo Shinjuku-ku, Tokyo, 169--8555, Japan. \\E-mail: \texttt{miyakek@aoni.waseda.jp}
\end{center}
}
\begin{abstract}
We study the field isomorphism problem of cubic generic polynomial $X^3+sX+s$
over the field of rational numbers with the specialization of the parameter $s$
to nonzero rational integers $m$ via primitive solutions to the family of cubic
Thue equations $x^3-2mx^2y-9mxy^2-m(2m+27)y^3=\lambda$ where $\lambda^2$ is a divisor
of $m^3(4m+27)^5$.
\end{abstract}
{\footnotesize
\begin{quote}
Keywords: Field isomorphism problem, generic polynomial, cubic Thue equations.
\end{quote}
}

\section{Introduction}\label{seintro}

Let $K$ be an arbitrary field and $K(s)$ the rational function field over $K$
with indeterminate $s$.
Let $S_n$ and $C_n$ denote the symmetric group of degree $n$ and the cyclic group of
order $n$ respectively.
We take the cubic polynomial
\[
f_s(X)=X^3+sX+s\in K(s)[X].
\]
The polynomial $f_s(X)$ is known as a {\it generic} polynomial for $S_3$ over $K$.
Namely the Galois group $\mathrm{Gal}_{K(s)} f_s(X)$
of $f_s(X)$ over $K(s)$ is isomorphic to $S_3$
and every Galois extension $L/M$ over an arbitrary field $M\supset K$ with Galois group
$S_3$ can be obtained as $L=\mathrm{Spl}_M f_a(X)$, the splitting field
of $f_a(X)$ over $M$, for some $a\in M$ (cf. \cite[Section 2.1]{JLY02}).

Because the generic polynomial $f_s(X)=X^3+sX+s$ supplies us all Galois extensions
with Galois group $S_3$ over the base field $K$ by specializing the parameter $s$
to elements $a\in K$, it is natural to ask the following problem:
\begin{isomproblem}
For $a,b\in K$, determine whether $\mathrm{Spl}_K f_a(X)$ and $\mathrm{Spl}_K f_b(X)$
are isomorphic over $K$ or not.
\end{isomproblem}
In the cyclic cubic case, Morton \cite{Mor94} gave an explicit answer to
this problem for the generic polynomial $X^3+sX^2-(s+3)X+1$ for $C_3$ over
a field $K$ with char $K\neq 2$ (see also \cite{Cha96} and \cite{HMa}).
In \cite{HM07}, the authors investigated the field isomorphism problem of $f_s(X)$ over
a field $K$ with char $K\neq 3$ and gave the following theorem:
\begin{theorem}[{\cite[Corollary 8]{HM07}}]\label{thS3isom}
Assume that $\mathrm{char}$ $K\neq 3$.
For $a,b\in K\setminus\{0,-27/4\}$ with $a\neq b$,
the splitting fields of $f_a(X)$ and of $f_b(X)$
over $K$ coincide if and only if there exists $u\in K$ such that
\[
b=\frac{a(u^2+9u-3a)^3}{(u^3-2au^2-9au-2a^2-27a)^2}.
\]
\end{theorem}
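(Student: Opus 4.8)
The plan is to turn the equality of splitting fields into the existence of a Tschirnhausen transformation and then to make that transformation explicit. Fix an algebraic closure, put $L_a=\mathrm{Spl}_K f_a(X)$ and $L_b=\mathrm{Spl}_K f_b(X)$, and first treat the main case $\mathrm{Gal}_K f_a(X)\cong S_3$. Here $L_a=L_b$ if and only if $f_b(X)$ has a root $\beta\in L_a$ with $[K(\beta):K]=3$: if $L_a=L_b$ then $[L_b:K]=6$, so $f_b$ is irreducible over $K$ and its roots qualify; conversely such a $\beta$ has $f_b$ as its minimal polynomial and $K(\beta)$ as a cubic subfield of $L_a$, and since every index-$3$ subgroup of $S_3$ has trivial core, the Galois closure of $K(\beta)/K$ --- which is $L_b$ --- is all of $L_a$. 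Fixing a root $\theta$ of $f_a(X)$, the cubic subfields of $L_a$ are the $\mathrm{Gal}(L_a/K)$-conjugates of $K(\theta)$, so after applying a suitable automorphism we may assume
\[
\beta=c_0+c_1\theta+c_2\theta^2,\qquad c_0,c_1,c_2\in K,\ (c_1,c_2)\neq(0,0),
\]
the condition $(c_1,c_2)\neq(0,0)$ being exactly $\beta\notin K$.

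Next I would impose that the minimal polynomial $g(X)=\prod_{i=1}^{3}(X-\beta_i)$ of $\beta$ over $K$, where $\beta_i=c_0+c_1\theta_i+c_2\theta_i^2$ and $\theta_1,\theta_2,\theta_3$ are the roots of $f_a$, have the shape $X^3+bX+b$. From $\theta^3+a\theta+a=0$ and Newton's identities one records $\sum_i\theta_i=0$, $\sum_i\theta_i^2=-2a$, $\sum_i\theta_i^3=-3a$, $\sum_i\theta_i^4=2a^2$. Vanishing of the $X^2$-coefficient of $g$ reads $\sum_i\beta_i=3c_0-2ac_2=0$, i.e.\ $c_0=\tfrac{2a}{3}c_2$. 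After this substitution the coefficient of $X$, namely $\sum_{i<j}\beta_i\beta_j=-\tfrac12\sum_i\beta_i^2$, becomes $a\varphi(c_1,c_2)$ with $\varphi=c_1^2+3c_1c_2-\tfrac{a}{3}c_2^2$, and the constant term $-\beta_1\beta_2\beta_3$ becomes $a\psi(c_1,c_2)$ with $\psi=c_1^3-\tfrac{2a}{3}c_1^2c_2-ac_1c_2^2-\tfrac{2a^2+27a}{27}c_2^3$; both come out by expanding the relevant products and re-expressing all symmetric functions of the $\theta_i$ through the four power sums above. The only remaining condition is $a\varphi=a\psi$, whose common value is $b$.

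Finally I would solve $a\varphi=a\psi$. The hypothesis $b\neq a$ excludes $c_2=0$ (that case yields $\beta=\theta$, $b=a$), and $b\neq 0$ then forces $\varphi\neq 0$; setting $u=3c_1/c_2$, homogeneity gives $\varphi=c_2^2\cdot\tfrac19(u^2+9u-3a)$ and $\psi=c_2^3\cdot\tfrac1{27}(u^3-2au^2-9au-2a^2-27a)$, so $\varphi=\psi$ pins down $c_2=\dfrac{3(u^2+9u-3a)}{u^3-2au^2-9au-2a^2-27a}$ (the denominator is nonzero, as otherwise $\varphi=0$ and $b=0$), and substituting back,
\[
b=a\varphi=a\,c_2^2\cdot\tfrac19(u^2+9u-3a)=\frac{a(u^2+9u-3a)^3}{(u^3-2au^2-9au-2a^2-27a)^2}.
\]
Conversely, for any $u\in K$ with the denominator nonzero these formulas produce $\beta\in K(\theta)\subseteq L_a$ with $(c_1,c_2)\neq(0,0)$ that is a root of $f_b(X)=X^3+bX+b$, whence $L_a=L_b$ by the first paragraph. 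I expect the real work to be the symmetric-function computation yielding $\varphi$ and especially the binary cubic $\psi$ (its $c_2^3$-coefficient is where a fraction or sign is easily mislaid), together with the bookkeeping that extends the statement off the $S_3$ locus: one checks that the quadratic resolvents $K(\sqrt{-(4a+27)})$ and $K(\sqrt{-(4b+27)})$ automatically coincide --- they do, because $\beta_i-\beta_j=(\theta_i-\theta_j)\bigl(c_1+c_2(\theta_i+\theta_j)\bigr)$ exhibits $\mathrm{disc}(f_b)/\mathrm{disc}(f_a)$ as a square in $K^\times$ --- and then handles the reducible and cyclic specializations directly, which is routine on $K\setminus\{0,-27/4\}$.
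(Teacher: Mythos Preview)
The paper does not actually prove Theorem~\ref{thS3isom}; it is quoted from \cite{HM07} (where it is Corollary~8), and that reference---as its title indicates---obtains the result precisely by analysing Tschirnhausen transformations of $f_a$. Your proposal is therefore in the same spirit as the original source: you parametrize elements of $K(\theta)$ as $c_0+c_1\theta+c_2\theta^2$, force the resulting cubic to have the shape $X^3+bX+b$, and read off the rational map in $u=3c_1/c_2$. The symmetric-function computation is correct (I checked $\varphi$, and the homogenized $\psi$ indeed gives $\tfrac{c_2^3}{27}(u^3-2au^2-9au-2a^2-27a)$), and the $S_3$ case is handled cleanly, including the exclusions $c_2=0\Rightarrow b=a$ and $\psi=0\Rightarrow b=0$.

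The only part that is genuinely thin is the final sentence. In the $C_3$ case your argument still goes through verbatim, since $1,\theta,\theta^2$ remain $K$-linearly independent and $K(\theta)$ is already Galois, so $L_b=K(\beta)=K(\theta)=L_a$. But when $f_a$ is reducible the basis $1,\theta,\theta^2$ collapses and your Tschirnhausen description no longer parametrizes the right objects; here the discriminant identity you mention (using $\theta_i+\theta_j=-\theta_k$ to see that $\prod_{i<j}(c_1+c_2(\theta_i+\theta_j))\in K$) handles the converse, while the forward direction needs a short direct check that every $b$ with $K(\sqrt{-4b-27})=K(\sqrt{-4a-27})$ is actually hit by some $u$. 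This is indeed routine, but ``routine'' is not the same as ``done''; a couple of lines making this explicit would close the argument.
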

For the polynomial $f_a(X)$, we assume that $a\neq 0$, $-27/4$ since the discriminant of
$f_a(X)$ is given by $-a^2(4a+27)$.
As a consequence of Theorem \ref{thS3isom}, we see that for an infinite field $K$ and
a fixed element $a\in K$, there exist infinitely many elements $b\in K$ such that
the field overlap $\mathrm{Spl}_K f_b(X)=\mathrm{Spl}_K f_a(X)$ occurs.

For $m\in\bZ$, we define the cubic form $F_m(X,Y)\in\bZ[X,Y]$ by 
\[
F_m(X,Y):=X^3-2mX^2Y-9mXY^2-m(2m+27)Y^3. 
\]
The aim of this paper is to study the field isomorphism problem of $f_s(X)$ over the field
$\bQ$ of rational numbers with the specialization $s\mapsto m\in \bZ$ in more detail
by Theorem \ref{thS3isom} via certain integral solutions $(x,y)\in\bZ^2$ to the family
of cubic Thue equations
\[
F_m(x,y)=\lambda\in\bZ.
\]
The main result of this paper is as follows:
\begin{theorem}\label{thmain}
Let $m\in\bZ\setminus\{0\}$ and $f_m(X)=X^3+mX+m\in \bZ[X]$.
If there exists $n\in\bZ\setminus\{0\}$ with $n\neq m$ such that
the splitting fields of $f_n(X)$ and of $f_m(X)$ over $\mathbb{Q}$ coincide
then there exists a primitive solution $(x,y)\in\mathbb{Z}^2$ with $y>0$ to
\begin{align}
F_m(x,y)=x^3-2mx^2y-9mxy^2-m(2m+27)y^3=\lambda\tag{$*$}
\end{align}
for such a $\lambda \in \bZ$ as $\lambda^2$ is a divisor of $\,m^3(4m+27)^5$.
In particular, the primitive solution $(x,y)\in\bZ^2$ to $(*)$ can be chosen to 
satisfy the relation
\begin{align}
n=m+\frac{m(4m+27)y(x^2+9xy+27y^2+my^2)(x^3-mx^2y-m^2y^3)}{F_m(x,y)^2}.\label{eqnm}
\end{align}
Conversely if there exists a primitive solution $(x,y)\in\mathbb{Z}^2$ to $(*)$ with
a $\lambda \in \bZ$ such that $\lambda^2$ is a divisor of $\,m^3(4m+27)^5$,
then for the rational number $n\in\bQ$ determined by $(\ref{eqnm})$
the splitting fields of $f_n(X)$ and of $f_m(X)$ over $\mathbb{Q}$ coincide,
except for the cases of $n=0$ and of $n=-27/4$.
For a fixed $m\in\bZ\setminus\{0\}$, furthermore, there exist only finitely
many $n\in\bZ$ such that the splitting field of $f_m(X)$ and of $f_n(X)$ over
$\mathbb{Q}$ coincide.
\end{theorem}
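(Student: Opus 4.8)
The plan is to deduce everything from Theorem~\ref{thS3isom}. Write $Q(X,Y)=X^2+9XY-3mY^2$ and $t=4m+27$, and note that the denominator $u^3-2mu^2-9mu-2m^2-27m$ occurring in Theorem~\ref{thS3isom} is exactly $F_m(u,1)$, while $u^2+9u-3m=Q(u,1)$. Since $m,n\in\bZ$ forces $m,n\neq-27/4$, Theorem~\ref{thS3isom} is applicable, and for $n\neq m$ the equality $\mathrm{Spl}_\bQ f_n(X)=\mathrm{Spl}_\bQ f_m(X)$ is equivalent to the existence of $u\in\bQ$ with $n=m\,Q(u,1)^3/F_m(u,1)^2$. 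Writing $u=x/y$ in lowest terms with $y>0$ and using homogeneity to clear denominators turns this into $n\,F_m(x,y)^2=m\,Q(x,y)^3$, so $(x,y)$ is a primitive solution of $(*)$ with $\lambda=F_m(x,y)$; the bridge to $(\ref{eqnm})$ is the polynomial identity
\[
Q(X,Y)^3-F_m(X,Y)^2=t\,Y\,(X^2+9XY+27Y^2+mY^2)(X^3-mX^2Y-m^2Y^3)
\]
(equivalently, $m(4m+27)\,Q$ is the Hessian covariant of the binary cubic $F_m$), which, divided by $\lambda^2$ and multiplied by $m$, rewrites $n=m\,Q(x,y)^3/\lambda^2$ as $(\ref{eqnm})$. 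I would record this reduction first: it makes the converse almost immediate and isolates the real content in the forward direction.

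For the forward direction, $n\in\bZ\setminus\{0\}$ gives $\lambda^2\mid m\,Q(x,y)^3$, and what remains — the main obstacle — is to prove $\lambda^2\mid m^3(4m+27)^5$, i.e.\ $2v_p(\lambda)\le 3v_p(m)+5v_p(4m+27)$ at every prime $p$. Two ingredients feed the argument: the inequality $2v_p(\lambda)\le v_p(m)+3v_p\big(Q(x,y)\big)$ coming from $v_p(n)\ge0$, and the substitution $w=2x+9y$ (so $\gcd(w,y)\mid 2$), under which
\[
8\lambda=w^3-tyw^2+9ty^2w-t^2y^3,\qquad 4\,Q(x,y)=w^2-3ty^2 .
\]
For a prime $p\ge5$ dividing $m$ one has $8\lambda\equiv(w-9y)^3\pmod p$, and for $p\ge5$ dividing $4m+27$ one has $F_m(X,1)\equiv(X+\tfrac92)^3\pmod p$, which forces $p\mid w$; in either case a short Newton polygon computation in the relevant valuations, together with $v_p(n)\ge0$, pins $v_p(\lambda)$ down tightly enough, and $\gcd(m,4m+27)\mid 27$ ensures $p$ divides at most one of $m,4m+27$. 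The primes $p=2$ and $p=3$ I would handle by the same method, using $v_2(4m+27)=0$ and $v_3(4m+27)\le3$. I expect this exponent bookkeeping to be the technical heart of the proof: the only purely global input available, the resultant identity $\mathrm{Res}\big(F_m(X,Y),Q(X,Y)\big)=m(4m+27)^3$ (which comes from $\mathrm{Res}(F,\mathrm{Hess}\,F)=\mathrm{disc}(F)^2$ and $\mathrm{disc}(F_m)=-m^2(4m+27)^3$), only yields $\min\big(v_p(\lambda),v_p(Q(x,y))\big)\le v_p\big(m(4m+27)^3\big)$, which is too weak and must be sharpened using the local shape of $F_m$ modulo $p$.

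For the converse I would argue directly: given a primitive solution $(x,y)$ of $(*)$ with $\lambda^2\mid m^3(4m+27)^5$, the integer $m^3(4m+27)^5$ is nonzero (as $m\neq0$ and $4m+27\neq0$), hence $\lambda\neq0$; if $y=0$ then $(\ref{eqnm})$ gives $n=m$ and there is nothing to prove, and if $y\neq0$ then $u:=x/y\in\bQ$ has $F_m(u,1)=\lambda/y^3\neq0$, while $(\ref{eqnm})$ together with the identity above reads $n=m\,Q(u,1)^3/F_m(u,1)^2$; Theorem~\ref{thS3isom} applied with $a=m$, $b=n$ (legitimate once $n\notin\{0,-27/4\}$; the case $n=m$ being trivial) then gives $\mathrm{Spl}_\bQ f_n(X)=\mathrm{Spl}_\bQ f_m(X)$.

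Finally, for the finiteness statement, fix $m\in\bZ\setminus\{0\}$. By the forward direction every $n\in\bZ\setminus\{0\}$ with $n\neq m$ and $\mathrm{Spl}_\bQ f_n(X)=\mathrm{Spl}_\bQ f_m(X)$ is produced via $(\ref{eqnm})$ from a primitive solution of $(*)$ for one of the finitely many $\lambda$ with $\lambda^2\mid m^3(4m+27)^5$, each such $\lambda$ nonzero. Since $F_m(X,Y)$ is a binary cubic form of nonzero discriminant $-m^2(4m+27)^3$, Thue's theorem gives only finitely many integer solutions to each equation $F_m(x,y)=\lambda$, whence only finitely many $n$.
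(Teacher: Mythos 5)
Your overall architecture is right: the reduction via Theorem~\ref{thS3isom}, the identity $Q(X,Y)^3-F_m(X,Y)^2=(4m+27)Y(X^2+9XY+27Y^2+mY^2)(X^3-mX^2Y-m^2Y^3)$ linking $n=mQ(x,y)^3/\lambda^2$ to $(\ref{eqnm})$, the treatment of the converse, and the appeal to Thue's theorem for finiteness all match what is needed and are essentially what the paper does. The problem is that the one step that carries the actual content of the theorem --- the divisibility $\lambda^2\mid m^3(4m+27)^5$ --- is only sketched, and the sketch stalls exactly where the difficulty lies. Your prime-by-prime plan rests on the inequality $2v_p(\lambda)\le v_p(m)+3v_p(Q(x,y))$ plus a Newton-polygon reading of $8\lambda=w^3-tyw^2+9ty^2w-t^2y^3$ and $4Q=w^2-3ty^2$ with $w=2x+9y$. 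But for $p\mid t=4m+27$ the valuations of the competing terms tie precisely when $2v_p(w)=v_p(t)$ (and similarly $3v_p(x)=v_p(m)$ on the other side), and in those degenerate cases neither $v_p(\lambda)$ nor $v_p(Q)$ is determined by the minimum of term valuations; one must dig into higher-order cancellation, and you give no argument there. Moreover your stated control at $p=3$, namely $v_3(4m+27)\le 3$, is false (e.g.\ $m=54$ gives $4m+27=3^5$), so the $p=3$ case cannot be dispatched ``by the same method'' as written. As it stands the central claim is asserted, not proved.

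For comparison, the paper closes this step globally rather than locally: writing $g(u)=m(u^2+9u-3m)^3$ and $h(u)=F_m(u,1)^2$, it extracts from the Sylvester matrix explicit polynomials $p(u),q(u)\in\bZ[m][u]$ with
\begin{align*}
g(u)p(u)+h(u)q(u)=m^3(4m+27)^5 ,
\end{align*}
homogenizes to $G(x,y)P(x,y)+F_m(x,y)^2Q(x,y)=m^3(4m+27)^5y^{11}$, and then uses $n=G(x,y)/\lambda^2\in\bZ$ together with $\gcd(\lambda,y)=1$ (which follows from $\lambda\equiv x^3\pmod y$ and $\gcd(x,y)=1$) to conclude $\lambda^2\mid m^3(4m+27)^5$ in one stroke. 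If you want to keep your local approach you must supply the missing cancellation analysis in the tie cases and repair the $p=3$ bound; otherwise, the B\'ezout identity is the clean way to finish, and it is also what explains where the exponents $3$ and $5$ come from.
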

We should consider only primitive solutions $(x,y)\in\bZ^2$ (i.e. $\mathrm{gcd}(x,y)=1$)
because $(x,y)$ and $(cx,cy)$ for $c\in\bZ\setminus\{0\}$ give the same $n\in\bZ$
by (\ref{eqnm}).
For $(x,y)\in\bZ^2$ with $y\neq 0$, 
we may assume that $y>0$ because, if $(x,y)$ is a solution to $(*)$
for $\lambda$, then $(-x,-y)$ becomes a solution to $(*)$ for $-\lambda$. 
For $(x,0)\in\bZ^2$, we may assume that $x>0$. 

For this reason, we exclude $(-1,0)\in\bZ^2$ from primitive solutions. 
Then the trivial primitive solution $(1,0)\in\bZ^2\setminus\{(-1,0)\}$ to $(*)$ 
for $\lambda=1$ implies the trivial equality $n=m$ by (\ref{eqnm}).
The converse also holds if $\mathrm{Gal}_\bQ f_m(X)\cong S_3$.
In general, we may obtain the following assertion which complements Theorem \ref{thmain}:
\begin{theorem}\label{thcomp}
For $m,n\in \bZ\setminus\{0\}$, we assume that
the splitting fields of $f_m(X)$ and of $f_n(X)$ over $\mathbb{Q}$ coincide.
Then we have:\\
$(\mathrm{i})$
If $\mathrm{Gal}_\bQ f_m(X)\cong S_3$ then there exists only one primitive solution 
$(x,y)\in\bZ^2\setminus\{(-1,0)\}$ with $y\geq0$ to $(*)$ which satisfies the condition 
$(\ref{eqnm})$ with respect to $m$ and $n$.
In particular, for $m=n$ such primitive solution is $(1,0)\in \bZ^2$;\\
$(\mathrm{ii})$
If $\mathrm{Gal}_\bQ f_m(X)\cong C_3$ then $m=-b^2-b-7$ for some $b\in\bZ$ and
there are exactly three primitive solutions $(x,y)\in\bZ^2\setminus\{(-1,0)\}$ with $y\geq0$
to $(*)$ which satisfy the condition $(\ref{eqnm})$ with respect to $m$ and $n$.
In particular, for $m=n$ such solutions are given by $(1,0)$, $(b-4,1)$, $(-b-5,1)\in \bZ^2$
with $\lambda=1,-(2b+1)^3, (2b+1)^3$ respectively;\\
$(\mathrm{iii})$
If $\mathrm{Gal}_\bQ f_m(X)\cong C_2$ then $m=n=-8$ and the corresponding primitive
solutions $(x,y)\in\bZ^2\setminus\{(-1,0)\}$ with $y\geq0$ to $(*)$ which satisfy the condition
$(\ref{eqnm})$ are given by $(1,0)$, $(-4,1)\in\bZ^2$ with $\lambda=1,-8$ respectively.
\end{theorem}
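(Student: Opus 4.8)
\medskip
The plan is to translate the statement into counting $\bQ$-rational points in one fibre of the degree-six rational map $b_m$ appearing in Theorem~\ref{thS3isom}, and then to identify that fibre with a set of $\bQ$-algebra isomorphisms whose cardinality depends only on $G:=\mathrm{Gal}_\bQ f_m(X)$. Write $L:=\mathrm{Spl}_\bQ f_m(X)=\mathrm{Spl}_\bQ f_n(X)$, so $G=\mathrm{Gal}_\bQ f_m(X)=\mathrm{Gal}_\bQ f_n(X)=\mathrm{Gal}(L/\bQ)$. For $m\in\bZ\setminus\{0\}$ the cubic $f_m(X)$ can never split completely over $\bQ$: a rational root $r$ would satisfy $m=-r^3/(r+1)\in\bZ$, forcing $r\in\{0,-2\}$ and hence $m=-8$, but $f_{-8}(X)=(X+2)(X^2-2X-4)$ is not totally split; so $G\in\{S_3,C_3,C_2\}$. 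To a primitive solution $(x,y)$ with $y>0$ attach $u:=x/y\in\bQ$, and to $(1,0)$ attach $u:=\infty$; this is a bijection onto $\bQ\cup\{\infty\}$.

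One verifies the polynomial identity
\[
m(u^2+9u-3m)^3-m(u^3-2mu^2-9mu-2m^2-27m)^2=m(4m+27)(u^2+9u+m+27)(u^3-mu^2-m^2),
\]
and then the substitution $u=x/y$ turns $(\ref{eqnm})$ into the single equation $n=b_m(u)$, where $b_m(u)$ is the right-hand side of the formula in Theorem~\ref{thS3isom} with $a=m$. Since for a primitive solution the divisibility $\lambda^2\mid m^3(4m+27)^5$ is automatic once the resulting $n$ lies in $\bZ$ (this follows from the resultant estimates in the proof of Theorem~\ref{thmain}), the primitive solutions $(x,y)\in\bZ^2\setminus\{(-1,0)\}$ with $y\ge0$ satisfying $(\ref{eqnm})$ with respect to $m$ and $n$ correspond bijectively to $\{u\in\bQ\cup\{\infty\}:b_m(u)=n\}$; it remains to count this set.

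Next I would identify $\{u\in\bQ\cup\{\infty\}:b_m(u)=n\}$ with the set of $\bQ$-algebra isomorphisms $\bQ[X]/(f_m)\to\bQ[X]/(f_n)$: in the construction underlying Theorem~\ref{thS3isom}, a rational value of $u$ records a Tschirnhausen transformation $f_m\rightsquigarrow f_n$, which is non-degenerate (because $b_m(u)$ is finite) and hence yields a $\bQ$-algebra isomorphism; composition of such transformations corresponds to composition of the isomorphisms; and $\mathrm{Spl}_\bQ f_m=\mathrm{Spl}_\bQ f_n$ forces $\bQ[X]/(f_m)\cong\bQ[X]/(f_n)$, so the set is nonempty by Theorem~\ref{thmain}. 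Fixing one isomorphism exhibits the set as a torsor under $\mathrm{Aut}_\bQ(\bQ[X]/(f_m))$, whence the number of solutions equals $|\mathrm{Aut}_\bQ(\bQ[X]/(f_m))|$, which I evaluate case by case. If $G\cong S_3$ then $\bQ[X]/(f_m)$ is a non-Galois cubic field, so $\mathrm{Aut}_\bQ(\bQ[X]/(f_m))=1$ and the count is $1$; for $m=n$ the unique solution is the identity, i.e.\ $(1,0)$. If $G\cong C_3$ then $f_m(X)$ is irreducible with square discriminant, and since $\mathrm{disc}\,f_m=-m^2(4m+27)$ this amounts to $-(4m+27)=(2b+1)^2$, i.e.\ $m=-b^2-b-7$ for some $b\in\bZ$ (whence $m\ne-8$, so $f_m$ is indeed irreducible); here $\bQ[X]/(f_m)=L$ is a cyclic cubic field, $\mathrm{Aut}_\bQ(\bQ[X]/(f_m))\cong C_3$, and the count is $3$. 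If $G\cong C_2$ then $f_m(X)$ and $f_n(X)$ are both reducible, hence have rational roots, which forces $m=n=-8$; then $\bQ[X]/(f_{-8})\cong\bQ\times\bQ(\sqrt{5})$, $\mathrm{Aut}_\bQ(\bQ[X]/(f_{-8}))\cong C_2$, and the count is $2$.

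To exhibit the solutions when $m=n$ I would read off the rational zeros from the identity above: $b_m(u)=m$ holds exactly for $u=\infty$, for the rational roots of $u^2+9u+m+27$, and for the rational roots of $u^3-mu^2-m^2$ (which, for $m\in\bZ\setminus\{0\}$, exist only when $m=-8$, then giving $u=-4$, by a short descent on $u(u+4)=\square$). In the $C_3$ case $u^2+9u+m+27=(u-(b-4))(u-(-b-5))$ splits over $\bQ$, so the solutions are $u=\infty$, $b-4$, $-b-5$, and a direct evaluation gives $F_m(1,0)=1$, $F_m(b-4,1)=-(2b+1)^3$, $F_m(-b-5,1)=(2b+1)^3$. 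In the $C_2$ case the quadratic $u^2+9u+19$ is irreducible over $\bQ$ while $u^3+8u^2-64=(u+4)(u^2+4u-16)$ contributes the single root $u=-4$, with $F_{-8}(1,0)=1$ and $F_{-8}(-4,1)=-8$; for general $n$ in the $C_3$ case the count $3$ also follows from the analogous factorization of $b_m(u)-n$. The step I expect to be the main obstacle is the honest verification that $\{u\in\bQ\cup\{\infty\}:b_m(u)=n\}$ is genuinely a torsor under $\mathrm{Aut}_\bQ(\bQ[X]/(f_m))$ — making precise the dictionary between a rational parameter $u$, the associated Tschirnhausen transformation, and the induced algebra isomorphism, and checking compatibility with composition; the remaining ingredients (the displayed polynomial identity, the factorizations of $b_m(u)-m$, and the evaluations of $F_m$) are routine but essential algebraic computations, and the divisibility $\lambda^2\mid m^3(4m+27)^5$ is already built into Theorem~\ref{thmain}.
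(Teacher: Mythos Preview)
Your approach is correct in outline but differs from the paper's. The paper does not set up the torsor under $\mathrm{Aut}_\bQ(\bQ[X]/(f_m))$; instead it invokes Theorem~\ref{thS3}, a result from the authors' earlier work giving the decomposition type $\mathrm{DT}(R_{m,n})$ in each Galois configuration. One then simply reads off the number of rational roots of $R_{m,n}$ as the number of $1$'s in the partition ($1$, $3$, or $2$ when $L_m=L_n$, according as $G_m\cong S_3$, $C_3$, $C_2$), together with Lemma~\ref{lemDTmm} for the $m=n$ case where $R_{m,m}$ degenerates to a quintic and the point at infinity absorbs one of the solutions. The explicit solutions for $m=n$ are then obtained, exactly as you do, by factoring $\varphi_m(x,y)=y(x^2+9xy+(m+27)y^2)(x^3-mx^2y-m^2y^3)$ over $\bQ$ in each case.

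Your route is more self-contained and conceptually transparent: it explains \emph{why} the counts $1,3,2$ arise, as orders of automorphism groups of the \'etale algebra $\bQ[X]/(f_m)$, rather than importing the answer from a decomposition-type table. The paper's route is shorter precisely because Theorem~\ref{thS3} has already absorbed that work. The step you flag as the main obstacle---that $\{u\in\bQ\cup\{\infty\}:b_m(u)=n\}$ is genuinely a torsor under $\mathrm{Aut}_\bQ(\bQ[X]/(f_m))$---is in fact what underlies Theorem~\ref{thS3} in the cited papers \cite{HMa,HMb}; making this dictionary precise (a rational $u$ records a nondegenerate Tschirnhausen transformation, hence an algebra isomorphism, and distinct $u$'s give distinct isomorphisms because $R_{m,n}$ has no multiple roots by~(\ref{discRmn})) would complete your argument and amount to an independent proof of the relevant rows of Table~1.
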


By Theorem \ref{thcomp}, we get the following corollary:
\begin{corollary}\label{corNM}
For $m\in\bZ\setminus\{0\}$, let $\mathcal{N}$ be the number of all primitive solutions 
$(x,y)\in\bZ^2\setminus\{(-1,0)\}$ with $y\geq 0$ to $(*)$ for $\lambda\in\bZ$ varying under the 
condition that $\lambda^2$ divides $\,m^3(4m+27)^5$.
Then we have
\begin{align*}
\#\{n\in\bZ\,|\,\mathrm{Spl}_\bQ f_n(X)=\mathrm{Spl}_\bQ f_m(X)\}\leq \frac{1}{\mu}\cdot
\mathcal{N}
\end{align*}
with $\mu=1$, $3$, $2$ according to $\mathrm{Gal}_\bQ f_m(X)\cong S_3$, $C_3$, $C_2$, 
respectively.
\end{corollary}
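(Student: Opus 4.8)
The plan is to combine Theorems~\ref{thmain} and~\ref{thcomp} into a single statement about how the primitive solutions counted by $\mathcal{N}$ are distributed among the values of $n$, and then simply to count. Write $\mathcal{S}=\{n\in\bZ\mid \mathrm{Spl}_\bQ f_n(X)=\mathrm{Spl}_\bQ f_m(X)\}$, the set on the left of the asserted inequality.

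First I would check that (\ref{eqnm}) is well defined on the solutions counted by $\mathcal{N}$: since $m\ne0$ and $4m+27\ne0$ (because $-27/4\notin\bZ$), every $\lambda$ with $\lambda^2\mid m^3(4m+27)^5$ is nonzero, so for a primitive solution $(x,y)$ of $(*)$ the denominator $F_m(x,y)^2=\lambda^2$ in (\ref{eqnm}) is nonzero and $(x,y)$ determines a rational number $n(x,y)$ through (\ref{eqnm}). I would also note that $\mathcal{S}\subset\bZ\setminus\{0\}$, since $\mathrm{Gal}_\bQ f_m(X)$ is nontrivial for every $m\in\bZ\setminus\{0\}$ (indeed $f_m$ is reducible over $\bQ$ only for $m=-8$, where $\mathrm{Gal}_\bQ f_m(X)\cong C_2$, and is irreducible otherwise), so $\mathrm{Spl}_\bQ f_m(X)\ne\bQ=\mathrm{Spl}_\bQ f_0(X)$ and $0\notin\mathcal{S}$; hence Theorem~\ref{thcomp} applies to every pair $(m,n)$ with $n\in\mathcal{S}$.

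Next, for each $n\in\mathcal{S}$ I would look at its fiber $\Phi(n)$: the set of primitive $(x,y)\in\bZ^2\setminus\{(-1,0)\}$ with $y\ge0$ that are counted by $\mathcal{N}$ (i.e. $F_m(x,y)^2\mid m^3(4m+27)^5$) and for which (\ref{eqnm}) holds with respect to $m$ and this $n$. The forward direction of Theorem~\ref{thmain} (for $n\ne m$) together with the trivial solution $(1,0)$, $\lambda=1$ (for $n=m$) show $\Phi(n)\ne\emptyset$, and Theorem~\ref{thcomp} says the number of primitive solutions in $\bZ^2\setminus\{(-1,0)\}$ with $y\ge0$ satisfying (\ref{eqnm}) with respect to $m$ and $n$ is exactly $\mu$, with $\mu=1,3,2$ according as $\mathrm{Gal}_\bQ f_m(X)\cong S_3,C_3,C_2$. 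In the latter two cases I would additionally verify that all $\mu$ of the solutions exhibited in Theorem~\ref{thcomp} satisfy the divisibility constraint on $\lambda$, using $4m+27=-(2b+1)^2$ when $m=-b^2-b-7$ (so $\lambda^2\in\{1,(2b+1)^6\}$ divides $m^3(4m+27)^5=-m^3(2b+1)^{10}$) and $m=-8$, $4m+27=-5$ in the $C_2$ case; this gives $\#\Phi(n)=\mu$ for every $n\in\mathcal{S}$. Since $n(x,y)$ is determined by $(x,y)$ once $m$ is fixed, the fibers $\Phi(n)$, $n\in\mathcal{S}$, are pairwise disjoint subsets of the solution set counted by $\mathcal{N}$.

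Summing over $n\in\mathcal{S}$ then gives $\mathcal{N}\ge\sum_{n\in\mathcal{S}}\#\Phi(n)=\mu\cdot\#\mathcal{S}$, which is the assertion. There is essentially no obstacle here beyond Theorems~\ref{thmain} and~\ref{thcomp} themselves; the one point to treat carefully is that Theorem~\ref{thcomp} controls the \emph{entire} fiber $\Phi(n)$, so that no extra solution of $(*)$ with admissible $\lambda$ maps to a given $n\in\mathcal{S}$, and conversely that each fiber's $\mu$ members genuinely lie among the solutions counted by $\mathcal{N}$. (Finiteness of $\mathcal{N}$, hence of $\mathcal{S}$, follows from finiteness of the admissible $\lambda$ together with finiteness of the solution set of each cubic Thue equation $F_m(x,y)=\lambda$, but it is not needed for the inequality.)
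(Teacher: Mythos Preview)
Your argument is correct and is essentially the same as the paper's, which simply records the corollary as an immediate consequence of Theorem~\ref{thcomp} without further elaboration; you have just written out the fiber-counting explicitly. One minor remark: your separate verification of the divisibility $\lambda^2\mid m^3(4m+27)^5$ for the explicit $m=n$ solutions in the $C_3$ and $C_2$ cases is harmless but redundant, since (as the proof of Theorem~\ref{thmain} shows) any primitive $(x,y)$ for which the value $n$ in (\ref{eqnm}) is an integer automatically satisfies this divisibility, and this covers all $\mu$ solutions supplied by Theorem~\ref{thcomp} for every $n\in\mathcal{S}$, not just the case $n=m$.
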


In Section \ref{sepre}, we prepare some lemmas and recall the result in \cite{HMa}
which gives a generalization of Theorem \ref{thS3isom}
(cf. also the survey paper \cite[Theorem 5.1]{HMc}).
In Section \ref{seproof}, we prove Theorem \ref{thmain} and Theorem \ref{thcomp}. 
We note that for fixed $m,n\in\bZ$ with $\mathrm{Spl}_\bQ f_m(X)=\mathrm{Spl}_\bQ f_n(X)$, 
the corresponding primitive solutions to $(*)$ can be obtained by (\ref{eqnm}) explicitly. 
In Sections \ref{seex1} and \ref{seex2}, we give some numerical examples of
Theorem \ref{thmain} via integral solutions to the family $F_m(X,Y)=\lambda$
of cubic Thue equations.
This computation was achieved by using PARI/GP \cite{PARI2} and we checked it
by Mathematica \cite{Wol07}.
For instance, we will give the following numerical example
(cf. Table $2$ in Section \ref{seex1}):
\begin{corollary}\label{cor105}
For $m\in\bZ\setminus\{0\}$ in the range $-10\leq m\leq 5$,
an integer $n\in\bZ\setminus\{0,m\}$ satisfies that
$\mathrm{Spl}_\bQ f_n(X)=\mathrm{Spl}_\bQ f_m(X)$ if and only if $(m, n)$ is one of 
the following $17$ pairs:
\begin{align*}
&(-10,-106480), (-10,-400), (-9,-3087), (-9,-27), (-7,-1588867),\\
&(-7,-189), (-7,-49), (-6,12), (-6,54), (-6,48000), (-5,625),\\
&(-4,128), (-3, 27), (-2,3456), (1, 300763), (2,208974222), (4,3456000).
\end{align*}
\end{corollary}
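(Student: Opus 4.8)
The plan is to turn Corollary \ref{cor105} into a finite computation governed by Theorem \ref{thmain}. For a fixed $m\in\bZ\setminus\{0\}$, that theorem tells us that every $n\in\bZ\setminus\{0,m\}$ with $\mathrm{Spl}_\bQ f_n(X)=\mathrm{Spl}_\bQ f_m(X)$ is of the form $(\ref{eqnm})$ for some primitive $(x,y)\in\bZ^2$ with $y>0$ solving $(*)$ with a right-hand side $\lambda\in\bZ$ whose square divides $m^3(4m+27)^5$; and, conversely, each such solution yields through $(\ref{eqnm})$ a rational number $n$ that gives the field coincidence unless $n\in\{0,-27/4\}$, of which only $n=0$ can occur here among integers. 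Hence it suffices, for each of the fifteen integers $m$ with $-10\le m\le 5$ and $m\neq 0$, to carry out the following: (i) factor $m^3(4m+27)^5$ and list every $\lambda\in\bZ$ with $\lambda^2\mid m^3(4m+27)^5$; (ii) for each such $\lambda$, determine the complete set of integral solutions of the cubic Thue equation $F_m(x,y)=\lambda$; (iii) keep the primitive solutions with $y>0$; (iv) compute $n$ from $(\ref{eqnm})$ for each and retain those with $n\in\bZ\setminus\{0,m\}$; and (v) form the union of the pairs $(m,n)$ so obtained. By Theorem \ref{thmain} this union is exactly the set of pairs $(m,n)$ with $-10\le m\le 5$, $m\neq 0$, $n\neq 0,m$ and $\mathrm{Spl}_\bQ f_n(X)=\mathrm{Spl}_\bQ f_m(X)$.

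Step (i) is routine: if $|m^3(4m+27)^5|=\prod_i p_i^{e_i}$, then the admissible $\lambda$ are precisely $\pm d$ with $d\mid\prod_i p_i^{\lfloor e_i/2\rfloor}$, a finite list which, in the stated range of $m$, is short. Step (ii) is legitimate because $F_m$ has nonzero discriminant for every $m\in\bZ\setminus\{0\}$ — indeed a direct computation gives $\mathrm{disc}(F_m)=-m^2(4m+27)^3$, which vanishes only for $m\in\{0,-27/4\}$ — so $F_m(x,y)=\lambda$ is a genuine cubic Thue equation (note $\lambda\neq 0$ automatically) with only finitely many integral solutions, effectively computable. In practice we invoke the Thue-equation routine of PARI/GP \cite{PARI2}, whose completeness rests on Baker-type estimates for linear forms in logarithms together with a lattice-reduction step, and we independently confirm each solution set with Mathematica \cite{Wol07}. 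Steps (iii)--(v) are then a direct substitution into $(\ref{eqnm})$.

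The main obstacle is the size rather than the nature of the computation: for the more negative values of $m$ the integer $m^3(4m+27)^5$ has many divisors, hence many cubic Thue equations must be resolved, and for each one must be certain the solver has returned the \emph{full} solution set — this is precisely where we lean on the rigor of the implemented algorithm and on the agreement between the two computer algebra systems. One extra point of bookkeeping concerns $m=-9,-8,-7$, for which $\mathrm{Gal}_\bQ f_m(X)$ is $C_3$, $C_2$, $C_3$ respectively (cf. Theorem \ref{thcomp}): there several distinct primitive solutions of $(*)$ may correspond to the same $n$, as quantified by Corollary \ref{corNM}, but this affects only the multiplicity of the search and not its completeness, so the final list of pairs is unchanged. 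Running steps (i)--(v) for all fifteen values of $m$ produces exactly the seventeen pairs displayed, and Theorem \ref{thmain} guarantees there are no others, which proves the corollary.
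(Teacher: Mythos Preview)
Your proposal is correct and follows essentially the same approach as the paper: the paper likewise reduces Corollary \ref{cor105} to the finite computation of all primitive solutions of $F_m(x,y)=\lambda$ with $\lambda^2\mid m^3(4m+27)^5$ for each $m$ in the range, carried out in PARI/GP and checked in Mathematica, and then reads off from the resulting table exactly the seventeen integer values of $n$ given by $(\ref{eqnm})$. Your remarks on the admissible $\lambda$, the nonvanishing of $\mathrm{disc}(F_m)$, and the bookkeeping for $m=-9,-8,-7$ match the paper's treatment via Theorem \ref{thcomp} and Corollary \ref{corNM}.
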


\section{Preliminaries}\label{sepre}

We take the cubic generic polynomial
\[
f_s(X)=X^3+sX+s\in K(s)[X]
\]
for $S_3$ over the base field $K$.
The discriminant of $f_s(X)$ is $-s^2(4s+27)$.
Hence the quadratic subfield of $\mathrm{Spl}_{K(s)} f_s(X)$ is given by
$K(s)(\sqrt{-4s-27})$ unless char $K=2$. 
We treat the case where $K=\bQ$ and take the specialization
$s\mapsto m\in\bZ\setminus\{0\}$.
Then we first see the following lemmas:
\begin{lemma}
For $m\in\bZ\setminus\{0\}$,
the splitting field $\mathrm{Spl}_\bQ f_m(X)$ is totally real $(resp.$ totally imaginary$)$ 
if $m\leq -7$ $(resp.$ $-6\leq m)$.
\end{lemma}
\begin{lemma}\label{lem8}
For $m\in\bZ\setminus\{0\}$,
the polynomial $f_m(X)$ is irreducible over $\mathbb{Q}$ except for $m=-8$.
In the case of $m=-8$, $f_{-8}(X)$ splits as $f_{-8}(X)=(X+2)(X^2-2X-4)$ over $\bQ$,
and hence $\mathrm{Spl}_\bQ f_{-8}(X)=\bQ(\sqrt{5})$.
\end{lemma}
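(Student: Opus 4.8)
The plan is to reduce the irreducibility question to the existence of a rational root and then exploit the special shape $f_m(X)=X^3+mX+m$ to determine exactly when such a root can occur. A monic cubic over $\bQ$ is reducible over $\bQ$ if and only if it has a root in $\bQ$, and by the rational root theorem any such root is an integer $r$ dividing the constant term $m$. Substituting $X=r$ into $f_m$ gives $r^3+mr+m=0$; since $r=-1$ would force $f_m(-1)=-1\neq 0$, we may solve for $m$ and obtain $m=-r^3/(r+1)$.

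The heart of the argument is the divisibility step: for $m$ to be an integer we need $(r+1)\mid r^3$, and since $\gcd(r+1,r)=1$ we also have $\gcd(r+1,r^3)=1$, whence $r+1=\pm1$. Thus $r\in\{0,-2\}$. The value $r=0$ gives $m=0$, which is excluded by hypothesis, so the only remaining case is $r=-2$, which yields $m=-8$. Hence $f_m(X)$ is irreducible over $\bQ$ for every $m\in\bZ\setminus\{0,-8\}$.

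For $m=-8$ one checks that $X=-2$ is a root of $f_{-8}(X)=X^3-8X-8$ and divides out the linear factor to get $f_{-8}(X)=(X+2)(X^2-2X-4)$; the quadratic factor has discriminant $20$, hence is irreducible over $\bQ$ with roots $1\pm\sqrt{5}$, giving $\mathrm{Spl}_\bQ f_{-8}(X)=\bQ(\sqrt{5})$. I do not expect any genuine obstacle here; the only point needing a little care is the coprimality observation that converts the integrality of $-r^3/(r+1)$ into the finite list $r\in\{0,-2\}$, together with noting that $r=0$ is ruled out precisely by the hypothesis $m\neq0$.
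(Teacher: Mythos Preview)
Your proof is correct and follows essentially the same route as the paper's: reduce reducibility of the monic cubic to the existence of an integer root $r$, derive $m=-r^3/(r+1)$, and use a divisibility argument to pin down $r\in\{0,-2\}$. The only cosmetic difference is that the paper writes $m=-a^2+a-1+\tfrac{1}{a+1}$ to force $a+1\mid 1$, whereas you argue via $\gcd(r+1,r^3)=1$; both yield the same conclusion.
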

\begin{proof}
If $f_m(X)$ is reducible over $\mathbb{Q}$ for $m\in\bZ$ then $f_m(X)$ has a linear factor.
Hence there exist $a,b,c\in\bZ$ such that $f_m(X)=X^3+mX+m=(X-a)(X^2+bX+c)$.
By comparing the coefficients, we have
\[
b=a,\quad c=\frac{a^2}{a+1}=a-1+\frac{1}{a+1},\quad
m=-\frac{a^3}{a+1}=-a^2+a-1+\frac{1}{a+1}.
\]
Hence we see $a\in\{-2,0\}$.
According to $a=-2$ and $a=0$, we have $(a,b,c,m)=(-2,-2,-4,-8)$ and $(0,0,0,0)$, respectively.
\end{proof}
\begin{lemma}\label{lemc3}
For $m\in\bZ\setminus\{0\}$, the Galois group $\mathrm{Gal}_\bQ f_m(X)$ of $f_m(X)$ over
$\bQ$ is isomorphic to $C_3$ if and only if there exists $b\in\bZ$ such that $m=-b^2-b-7$.
\end{lemma}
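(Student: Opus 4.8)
The plan is to combine the classical discriminant criterion for the Galois group of an irreducible cubic with Lemma \ref{lem8}, which already settles reducibility. Recall from the discussion at the start of Section \ref{sepre} that the discriminant of $f_m(X)=X^3+mX+m$ is $-m^2(4m+27)$, and that (for $\mathrm{char}\,K\neq 2$) an irreducible cubic has Galois group $C_3$ if and only if its discriminant is a square in the base field; equivalently here, $\mathrm{Spl}_\bQ f_m(X)$ has no quadratic subfield, i.e. $-4m-27$ is a square in $\bQ$.

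First I would dispose of the exceptional value $m=-8$: the equation $-b^2-b-7=-8$ becomes $b^2+b-1=0$, which has no rational root, so $-8$ is not of the asserted form; on the other hand $\mathrm{Gal}_\bQ f_{-8}(X)\cong C_2\not\cong C_3$ by Lemma \ref{lem8}, so the claimed equivalence holds trivially at $m=-8$. For every $m\in\bZ\setminus\{0,-8\}$, Lemma \ref{lem8} gives that $f_m(X)$ is irreducible over $\bQ$, hence $\mathrm{Gal}_\bQ f_m(X)$ is $S_3$ or $C_3$, and it is $C_3$ precisely when the integer $-m^2(4m+27)$ is a perfect square. Since $m^2$ is already a square, this is equivalent to $-(4m+27)$ being the square of a nonnegative integer $k$, i.e. to solvability of $4m+27=-k^2$ in integers.

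Then I would carry out the elementary Diophantine analysis of $4m+27=-k^2$: reducing modulo $4$ forces $k$ to be odd, say $k=2b+1$ with $b\in\bZ$, and substituting yields $4m+27=-(4b^2+4b+1)$, that is, $m=-b^2-b-7$. Conversely, for $m=-b^2-b-7$ one computes $4m+27=-(2b+1)^2$, so the discriminant equals $\bigl(m(2b+1)\bigr)^2$, a square; moreover such $m$ is automatically nonzero (as $b^2+b+7>0$ for all $b\in\bZ$) and is different from $-8$ by the first step, so Lemma \ref{lem8} applies and gives $\mathrm{Gal}_\bQ f_m(X)\cong C_3$. All the computations involved are routine, so there is no serious obstacle; the only points needing a little care are the mod-$4$ parity argument that pins down the substitution $k=2b+1$ and hence produces exactly the one-parameter family $m=-b^2-b-7$, and the bookkeeping confirming that this family avoids both $m=0$ and $m=-8$.
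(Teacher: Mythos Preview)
Your proof is correct and follows essentially the same approach as the paper: use the discriminant criterion to reduce to $-(4m+27)=a^2$, observe that integrality of $m$ forces $a$ to be odd, and substitute $a=2b+1$ to obtain $m=-b^2-b-7$. The only difference is that you handle irreducibility and the exceptional value $m=-8$ more explicitly via Lemma~\ref{lem8}, whereas the paper's proof leaves this implicit.
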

\begin{proof}
If $\mathrm{Gal}_\bQ f_m(X)\cong C_3$ then there exists $a\in\bZ$ such that
$a^2=-(4m+27)$ because the discriminant $\mathrm{disc}(f_m(X))$
of $f_m(X)$ is $-m^2(4m+27)$.
In this case, $m=-(a^2+27)/4\in\bZ$ and hence $a=2b+1$ for some $b\in\bZ$.
Thus we get $m=-b^2-b-7$.
Conversely if $m=-b^2-b-7$ with $b\in\bZ$ then we have
$\mathrm{Gal}_\bQ f_m(X)\cong C_3$ because $\mathrm{disc}(f_m(X))=(2b+1)^2(b^2+b+7)^2$.
\end{proof}
For $m,n\in\bZ$, we define a sextic polynomial $R_{m,n}(X)\in\bZ[X]$ by
\begin{align*}
R_{m,n}(X)&:=m(X^2+9X-3m)^3-n(X^3-2mX^2-9mX-2m^2-27m)^2\\
&\ =(m-n)X^6+m(4n+27)X^5-m(4mn+9m-18n-243)X^4\\
&\quad\ -m(32mn+162m-54n-729)X^3-m^2(8mn-27m+189n+729)X^2\\
&\quad\ -9m^2(4mn-27m+54n)X-m^2(4m^2n+27m^2+108mn+729n).
\end{align*}
The discriminant of $R_{m,n}(X)$ with respect to $X$ is given by
\begin{align}
\mathrm{disc}(R_{m,n}(X))=m^{10}(4m+27)^{15}n^4(4n+27)^3\in\bZ.\label{discRmn}
\end{align}
It follows from Theorem \ref{thS3isom} that for $m,n\in\bZ\setminus\{0\}$ with $m\neq n$,
there exists $u\in\bQ$ such that $R_{m,n}(u)=0$ if and only if
$\mathrm{Spl}_\bQ f_m(X)=\mathrm{Spl}_\bQ f_n(X)$.

We define the {\it decomposition type} DT$(R)$ of the polynomial $R(X)$ of degree $l$ by
the partition of $l$ induced by the degrees of the irreducible factors of $R(X)$ over $\bQ$.
For $m,n\in\bZ\setminus\{0\}$, we put
\begin{align*}
L_m:=\mathrm{Spl}_\bQ f_m(X),\qquad
G_m:=\mathrm{Gal}_\bQ f_m(X),\qquad
G_{m,n}:=\mathrm{Gal}_\bQ \bigl(f_m(X)f_n(X)\bigr).
\end{align*}
In \cite{HMa} and \cite{HMb}, the authors gave the following theorem for a field $K$ 
with char $K\neq 3$ as a generalization of Theorem \ref{thS3isom}
(cf. also \cite[Theorem 7]{HM07} and \cite[Theorem 5.1]{HMc}).
Here we state the theorem only for the case where $K=\bQ$ and $m,n\in\bZ$.
\begin{theorem}\label{thS3}
For $m,n\in \bZ\setminus\{0\}$ with $m\neq n$, we assume that $\# G_m\geq \# G_n$.
Then the Galois group $G_{m,n}=\mathrm{Gal}_\bQ (f_m(X)f_n(X))$ and the intersection field
$L_m\cap L_n=\mathrm{Spl}_\bQ f_m(X)\cap \mathrm{Spl}_\bQ f_n(X)$ are given
by the decomposition type {\rm DT}$(R_{m,n})$ of $R_{m,n}(X)$ as Table $1$ shows.
\end{theorem}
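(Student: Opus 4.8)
The plan is to translate everything into the group theory of $G_{m,n}\leq G_m\times G_n$. Since $L_mL_n$ is the splitting field of $f_m(X)f_n(X)$, the group $G_{m,n}$ is a subdirect product of $G_m\times G_n$, so by Goursat's lemma it is determined up to conjugacy by the common quotient $G_m/N_m\cong G_n/N_n$, which is canonically $\mathrm{Gal}_\bQ(L_m\cap L_n)$; thus it suffices to compute $L_m\cap L_n$. First I would fix the abstract isomorphism types of $G_m$ and $G_n$: by Lemma \ref{lem8} the cubic $f_m(X)$ is irreducible except when $m=-8$, and by Lemma \ref{lemc3} one has $G_m\cong C_3$ exactly when $-(4m+27)$ is a square, so $G_m\in\{S_3,C_3,C_2\}$ is read off from $m$. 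Under the hypothesis $\#G_m\geq\#G_n$ this leaves only the pairs $(S_3,S_3)$, $(S_3,C_3)$, $(S_3,C_2)$, $(C_3,C_3)$, $(C_3,C_2)$, and for each of them the admissible subdirect products — and hence the possible pairs $(G_{m,n},\,L_m\cap L_n)$ — form a short finite list that I would enumerate directly (noting, for instance, that $S_3$ has no $C_3$ quotient, which rules out several a priori possibilities).

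Next I would connect this list to the factorization of $R_{m,n}(X)$. The defining identity $R_{m,n}(X)=m(X^2+9X-3m)^3-n(X^3-2mX^2-9mX-2m^2-27m)^2$ together with Theorem \ref{thS3isom} (applied with $a=m,\ b=n$) shows that a root $u$ of $R_{m,n}(X)$ is rational if and only if $L_m=L_n$; more precisely, using the explicit formulas of \cite{HMa} and \cite{HMb}, each root $u$ of $R_{m,n}(X)$ can be written as a rational function of a root of $f_m(X)$ and a root of $f_n(X)$, so that $\mathrm{Spl}_\bQ R_{m,n}(X)$ sits inside $L_mL_n$ and the six roots of $R_{m,n}(X)$ form a natural $G_{m,n}$-set. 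Consequently $\mathrm{DT}(R_{m,n})$ is exactly the partition of $6$ given by the $G_{m,n}$-orbit lengths on that set, and I would compute these orbit lengths for each configuration from the previous step. Here the discriminant formula $(\ref{discRmn})$ enters in the guise $\bQ\bigl(\sqrt{\mathrm{disc}\,R_{m,n}(X)}\,\bigr)=\bQ\bigl(\sqrt{(4m+27)(4n+27)}\,\bigr)$, which records whether the quadratic subfields of $L_m$ and $L_n$ coincide and thereby distinguishes configurations whose orbit-length partitions would otherwise agree; the number of rational roots of $R_{m,n}(X)$ (one or three when $L_m=L_n$, according as $G_m\cong S_3$ or $C_3$, cf. Theorem \ref{thcomp}) plays a similar separating role. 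The degenerate case $m=-8$ or $n=-8$, where $f_{-8}(X)=(X+2)(X^2-2X-4)$ and $G_{-8}\cong C_2$, I would dispatch separately and by hand.

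Assembling these computations produces Table $1$, and reading it in reverse gives the converse: since the map from the finite list of configurations $(G_{m,n},L_m\cap L_n)$ to decomposition types (refined, where necessary, by the square class of $\mathrm{disc}\,R_{m,n}(X)$ and the count of rational roots) is injective, $\mathrm{DT}(R_{m,n})$ determines $G_{m,n}$ and $L_m\cap L_n$. I expect the main obstacle to be the middle step: establishing the explicit realization of the six roots of $R_{m,n}(X)$ inside $L_mL_n$ and the resulting $G_{m,n}$-action, since once that is in hand the remaining work — enumerating subdirect products, computing orbit lengths, and checking injectivity — is a finite and essentially mechanical, if somewhat lengthy, verification.
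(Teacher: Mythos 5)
The paper itself offers no proof of Theorem \ref{thS3}: it is quoted verbatim from \cite{HMa} and \cite{HMb}, so there is no in-paper argument to compare against. Your sketch is, in substance, the strategy of those references: the six roots of $R_{m,n}(X)$ are realized as explicit rational expressions in the roots of $f_m$ and $f_n$ (via the formal Tschirnhausen transformation), hence lie in $L_mL_n$ and carry a $G_{m,n}$-action identifying them with the six bijections between the two root sets; ${\rm DT}(R_{m,n})$ is then the orbit-length partition, computed case by case after a Goursat enumeration of the subdirect products $G_{m,n}\leq G_m\times G_n$. Two cautions. First, the step you yourself flag as the main obstacle --- the equivariant identification of the roots of $R_{m,n}$ with the set of bijections, including the verification that the roots are distinct (which is where $(\ref{discRmn})$ and the exclusion of $n=0,-27/4$ actually enter) --- is the entire content of the theorem; as written your proposal defers it wholesale to \cite{HMa}, \cite{HMb}, so it is an outline rather than a proof. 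Second, you should not invoke Theorem \ref{thcomp} to count rational roots: in this paper Theorem \ref{thcomp} is \emph{deduced from} Table $1$, so that would be circular; the count of rational roots must instead come out of the fixed-point computation for the $G_{m,n}$-action, which it does. Finally, note that for fixed $(G_m,G_n)$ the decomposition types in Table $1$ are already pairwise distinct, so the refinement by the square class of $\mathrm{disc}\,R_{m,n}(X)$ is needed only inside the derivation (to pin down the common quotient as the sign quotient when $[L_m\cap L_n:\bQ]=2$), not to make the table well defined.
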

\begin{center}
\vspace*{1mm}
{\rm Table} $1$\vspace*{4mm}\\
\begin{tabular}{|c|c|c|l|l|}\hline
$G_m$& $G_n$ & $G_{m,n}$ & & ${\rm DT}(R_{m,n})$\\ \hline
& & $S_3\times S_3$ & $L_m\cap L_n=\bQ$ & $6$ \\ \cline{3-5}
& $S_3$ & $(C_3\times C_3)\rtimes C_2$ & $[L_m\cap L_n : \bQ]=2$ & $3,3$\\ \cline{3-5}
\raisebox{-1.6ex}[0cm][0cm]{$S_3$} & & $S_3$ & $L_m=L_n$ & $3,2,1$ \\ \cline{2-5}
& $C_3$ & $S_3\times C_3$ & $L_m\cap L_n=\bQ$ & $6$ \\ \cline{2-5}
& \raisebox{-1.6ex}[0cm][0cm]{$C_2$} & $S_3\times C_2$ & $L_m\not\supset L_n$ & $6$\\
\cline{3-5}
& & $S_3$ & $L_m\supset L_n$ & $3,3$ \\ \hline
& \raisebox{-1.6ex}[0cm][0cm]{$C_3$} & $C_3\times C_3$ & $L_m\neq L_n$ & $3,3$ \\ \cline{3-5}
$C_3$ & & $C_3$ & $L_m=L_n$ & $3,1,1,1$\\ \cline{2-5}
& $C_2$ & $C_6$ & $L_m\cap L_n=\bQ$ & $6$ \\ \hline
\raisebox{-1.6ex}[0cm][0cm]{$C_2$} & \raisebox{-1.6ex}[0cm][0cm]{$C_2$} & $C_2\times C_2$
& $L_m\neq L_n$ & $4,2$ \\ \cline{3-5}
& & $C_2$ & $L_m=L_n$ & $2,2,1,1$ \\ \hline
\end{tabular}
\vspace*{5mm}\\
\end{center}

We did not exclude the case $G_{m,n}\cong C_2\times C_2$ from Table $1$ although 
it does not occur for $m,n\in\bZ\setminus\{0\}$ as we see from Lemma \ref{lem8}. 
This is because the table shows all cases for $m,n\in\bQ\setminus\{0,-27/4\}$ with $m\neq n$.

For $u\in\bQ$ with $R_{m,n}(u)=0$,
we write $u=x/y$ as a quotient of relatively prime integers $x,y\in\bZ$ with $y>0$.
Put $z:=1/u=y/x$ unless $u=0$.
We take
\begin{align*}
R_{m,n}'(X)&=X^6\cdot R_{m,n}(1/X)\\
&=m(1+9X-3mX^2)^3-n(1-2mX-9mX^2-2m^2X^3-27mX^3)^2.
\end{align*}
Then $R_{m,n}(u)=0$ if and only if $R_{m,n}'(z)=0$.
We now consider the case of $m=n$.
We easily see that $m=n$ if and only if $R_{m,n}'(0)=0$ because the constant term of $R_{m,n}'(X)$
is $m-n$.
Note that Theorem \ref{thS3} is valid not only for $R_{m,n}(X)$ but also for $R_{m,n}'(X)$
(cf. for example, \cite{HMa}, \cite{HMb}, \cite{HMc} and \cite{HMd}).
For $n=m$, we see
\[
R_{m,m}'(X)=-m(4m+27)X(mX^2+27X^2+9X+1)(m^2X^3+mX-1).
\]
This means that, in the case of $n=m$, the polynomial $R_{m,m}(X)$ becomes quintic,
and the vanishing root corresponds to the point at infinity (i.e. $[x,y]=[1,0]$).
\begin{lemma}\label{lemDTmm}
For $m\in\bZ\setminus\{0\}$,
the decomposition type ${\rm DT}(R_{m,m})$ of $R_{m,m}(X)$ is given by
\begin{align*}
{\rm DT}(R_{m,m})=
\begin{cases}
3,2,\hspace*{12.3mm} \mathrm{if}\quad G_m\cong S_3,\\
3,1,1,\qquad \mathrm{if}\quad G_m\cong C_3,\\
2,2,1,\qquad \mathrm{if}\quad G_m\cong C_2.
\end{cases}
\end{align*}
\end{lemma}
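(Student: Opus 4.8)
The plan is to make $R_{m,m}(X)$ completely explicit and then read its decomposition type off from the two nonconstant factors. From the displayed factorization of $R_{m,m}'(X)$ together with the relation $R_{m,m}(X)=X^6R_{m,m}'(1/X)$ (equivalently, by specializing the displayed formula for $R_{m,n}(X)$ to $n=m$ and factoring), a short computation yields
\[
R_{m,m}(X)=m(4m+27)\bigl(X^2+9X+m+27\bigr)\bigl(X^3-mX^2-m^2\bigr).
\]
Since $m\in\bZ\setminus\{0\}$ gives $4m+27\neq0$, the type $\mathrm{DT}(R_{m,m})$ is the concatenation of the decomposition types of the quadratic $g(X):=X^2+9X+m+27$ and the cubic $h(X):=X^3-mX^2-m^2$, once one checks that $g$ and $h$ share no irreducible factor.

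Next I would analyze $g$ and $h$ separately. For $g$, $\mathrm{disc}(g)=81-4(m+27)=-(4m+27)\neq0$, so $g$ is either irreducible over $\bQ$ or a product of two distinct linear factors, according to whether $-(4m+27)$ is a nonsquare or a square in $\bQ$. Since $\mathrm{disc}(f_m)=-m^2(4m+27)$ and, by Lemma \ref{lem8}, $f_m$ is reducible only for $m=-8$ (where $-(4m+27)=5$ is a nonsquare), Lemma \ref{lemc3} shows that $-(4m+27)$ is a square in $\bQ$ exactly when $G_m\cong C_3$. For $h$, which is monic over $\bZ$, reducibility over $\bQ$ is the same as having an integer root $d$; rewriting $d^3-md^2-m^2=0$ as $d^2(d-m)=m^2$ forces $d\mid m$, and then setting $m=de$ reduces the equation to $d(1-e)=e^2$, so $1-e\mid e^2$ and hence $1-e=\pm1$; since $e=0$ gives $m=0$, this leaves $e=2$, $d=-4$, $m=-8$. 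Thus $h$ is irreducible for every $m\neq-8$, while $h(X)=(X+4)(X^2+4X-16)$ when $m=-8$, the quadratic factor being irreducible because its discriminant $80$ is not a square.

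Finally I would assemble the three cases, using that (for $m\in\bZ\setminus\{0\}$) the condition $m=-8$ is equivalent to $G_m\cong C_2$ by Lemmas \ref{lem8} and \ref{lemc3}, since $-8$ is not of the form $-b^2-b-7$. If $G_m\cong S_3$, then $m\neq-8$ and $-(4m+27)$ is a nonsquare, so $\mathrm{DT}(g)=2$ and $\mathrm{DT}(h)=3$; if $G_m\cong C_3$, then $m\neq-8$ and $-(4m+27)$ is a square, so $\mathrm{DT}(g)=1,1$ and $\mathrm{DT}(h)=3$; if $G_m\cong C_2$, then $m=-8$, so $\mathrm{DT}(g)=2$ and $\mathrm{DT}(h)=2,1$. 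In each case $g$ and $h$ are coprime --- for degree reasons when $G_m\cong S_3$, because $h$ has no rational root when $G_m\cong C_3$, and because $X^2+9X+19\neq X^2+4X-16$ when $m=-8$ --- so the decomposition types concatenate to $3,2$; $3,1,1$; $2,2,1$ respectively, as asserted. I do not expect a genuine obstacle; the only care needed is the bookkeeping that keeps the conditions ``$-(4m+27)$ a square'', ``$m=-8$'', and ``$G_m\cong S_3$, $C_3$, or $C_2$'' correctly aligned through Lemmas \ref{lem8} and \ref{lemc3}, together with the coprimality checks that license the concatenation of types.
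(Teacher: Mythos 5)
Your proof is correct, and it follows exactly the route the paper leaves implicit: the factorization $R_{m,m}(X)=m(4m+27)(X^2+9X+m+27)(X^3-mX^2-m^2)$ is just the reversal of the displayed factorization of $R_{m,m}'(X)$, and the case analysis of the quadratic (discriminant $-(4m+27)$, hence split iff $G_m\cong C_3$ by Lemma \ref{lemc3}) and of the cubic (reducible only for $m=-8$, matching Lemma \ref{lem8}) gives the three decomposition types. Your rational-root computation for $X^3-mX^2-m^2$ is a valid self-contained substitute for observing that this factor is a rescaled reversal of $f_m(X)$ itself.
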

Next we consider the cases of $n=0$ and of $n=-27/4$.
By (\ref{discRmn}), the sextic polynomials $R_{m,0}(X)$ and $R_{m,-27/4}(X)$
have multiple roots.
Indeed we have $R_{m,0}(X)=m(X^2+9X-3m)^3$ and $R_{m,-27/4}(X)=(4m+27)(X^3+9mX+27m)^2/4$.
\begin{lemma}\label{lem3c}
{\rm (i)}\ For $m\in\bZ\setminus\{0\}$, there exists $u\in\bQ$ such that $R_{m,0}(u)=0$
if and only if there exists $c\in\bZ$ such that $m=3c(c+3)$;\\
{\rm (ii)}\ For $m\in\bZ\setminus\{0\}$, there exists $u\in\bQ$ such that
$R_{m,-27/4}(u)=0$ if and only if $m=-8$.
\end{lemma}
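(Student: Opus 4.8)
The plan is to treat the two factored polynomials separately and reduce each to an elementary Diophantine condition. For part (i), we have the explicit factorization $R_{m,0}(X)=m(X^2+9X-3m)^3$, so a rational root $u$ of $R_{m,0}(X)$ exists if and only if the quadratic $X^2+9X-3m$ has a rational root. First I would observe that this quadratic has a rational root precisely when its discriminant $81+12m$ is a square of a rational, hence (by integrality) a square of an integer; writing $81+12m=d^2$ forces $d\equiv 3\pmod 6$, say $d=3(2c+3)$ for some $c\in\bZ$, and substituting back yields $81+12m=9(2c+3)^2$, i.e. $m=3(c^2+3c)=3c(c+3)$. Conversely, if $m=3c(c+3)$ then $81+12m=9(2c+3)^2$ is a perfect square, so the quadratic $X^2+9X-3m$ splits over $\bQ$ and $R_{m,0}$ has a rational root. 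One should double-check that the resulting root is genuinely rational (not merely that the discriminant is a square), but since the leading coefficient of the quadratic is $1$, the roots $(-9\pm 3(2c+3))/2$ are automatically in $\tfrac12\bZ$, and a quick parity check on $9\pm 3(2c+3)$ shows they are integers; this is a routine verification I would not belabor.

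For part (ii), we use $R_{m,-27/4}(X)=(4m+27)(X^3+9mX+27m)^2/4$, so a rational root exists if and only if the cubic $g(X):=X^3+9mX+27m$ has a rational root. Since $g$ is monic with integer coefficients, any rational root is an integer dividing $27m$. The cleanest route is to relate this cubic back to $f_m(X)=X^3+mX+m$: a direct substitution (or comparison of the two cubics up to a scaling of the variable) should identify $g(X)$ with a transform of $f_m$, so that $g$ has a rational root if and only if $f_m$ is reducible over $\bQ$. By Lemma \ref{lem8}, $f_m(X)$ is reducible over $\bQ$ (for $m\in\bZ\setminus\{0\}$) exactly when $m=-8$, which gives the claim. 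Alternatively, and perhaps more transparently, one substitutes $X=3Y$ into $g$ to get $27Y^3+27mY+27m=27(Y^3+mY+m)=27f_m(Y)$; thus $g$ has a rational (equivalently integral) root if and only if $f_m$ does, and Lemma \ref{lem8} finishes the argument, pinning down $m=-8$ with the root $Y=-2$, i.e. $X=-6$.

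The main obstacle is essentially bookkeeping rather than depth: in part (i) one must be careful that "discriminant is a square in $\bQ$" is correctly translated into the stated parametrization $m=3c(c+3)$ with $c\in\bZ$ (handling the factor of $3$ and the parity constraints so that $c$ comes out an integer, not a half-integer), and one must confirm that the root is rational and not just that the discriminant is a rational square. In part (ii) the only subtlety is spotting the clean substitution $X=3Y$ that reduces $g$ to $27f_m$; once that is in hand, the result is immediate from Lemma \ref{lem8}. Neither step requires more than elementary manipulation, so I would present both parts concisely, emphasizing the substitution in (ii) and the discriminant-and-parity computation in (i).
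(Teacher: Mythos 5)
Your proposal is correct, and part (ii) takes a genuinely different (and cleaner) route than the paper. For part (i) the two arguments are essentially the same: the paper factors $X^2+9X-3m=(X-a)(X-b)$ with $a,b\in\bZ$ (implicitly using that a monic integral quadratic with a rational root splits over $\bZ$) and reads off $m=b(b+9)/3$, forcing $b=3c$; your discriminant-and-parity computation $81+12m=9(2c+3)^2$ is just a reparametrization of the same bookkeeping and lands on the same roots $3c$ and $-3c-9$. For part (ii), however, the paper repeats from scratch a coefficient comparison $u^3+9mu+27m=(u-a)(u^2+bu+c)$, deriving $m=-a^3/(9(a+3))$, setting $a=3d$ and solving $m=-d^3/(d+1)\in\bZ$ to get $d\in\{-2,0\}$ --- a computation that visibly parallels the proof of Lemma \ref{lem8} without invoking it. Your substitution $X=3Y$, which gives $X^3+9mX+27m=27\,f_m(Y)$ exactly, short-circuits all of this: the cubic has a rational root if and only if $f_m$ does, and Lemma \ref{lem8} immediately pins down $m=-8$ (with root $X=-6$, matching the paper's $a=-6$). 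This buys you a shorter proof and makes transparent \emph{why} the answer $m=-8$ coincides with the reducibility locus of $f_m$; the paper's self-contained computation buys nothing extra here beyond independence from Lemma \ref{lem8}. One small point worth stating explicitly in (ii): since $m\in\bZ$ implies $4m+27\neq 0$, the prefactor $(4m+27)/4$ never vanishes, so $R_{m,-27/4}(u)=0$ is indeed equivalent to the cubic factor vanishing.
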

\begin{proof}
(i) If there exists $u\in\bQ$ such that $R_{m,0}(u)=m(u^2+9u-3m)^3=0$,
then there exist integers $a,b\in\bZ$ such that $u^2+9u-3m=(u-a)(u-b)$.
In this case, we see $m=b(b+9)/3$ and $a=-b-9$.
Thus, $b=3c$ for some $c\in\bZ$, and hence $m=3c(c+3)$.
Conversely if $m=3c(c+3)$, then we have $R_{3c(c+3),0}(X)=3c(c+3)(X-3c)^3(X+3c+9)^3$.\\
(ii) If there exists $u\in\bQ$ such that $R_{m,-27/4}(u)=(4m+27)(u^3+9mu+27m)^2/4=0$,
then there exist integers $a,b,c\in\bZ$ such that $u^3+9mu+27m=(u-a)(u^2+bu+c)$.
By comparing the coefficients, we obtain
\begin{align*}
b=a,\quad c=\frac{3a^2}{a+3},\quad m=-\frac{a^3}{9(a+3)}.
\end{align*}
Thus $a=3d$ for some $d\in\bZ$, and
\begin{align*}
m=-\frac{d^3}{d+1}=-d^2+d-1+\frac{1}{d+1}.
\end{align*}
Hence we have $d\in\{-2,0\}$.
According to $d=-2$ and $d=0$, we get $(a,b,c,m)=(-6,-6,-36,-8)$ and $(0,0,0,0)$,
respectively.
\end{proof}

As an application, by using the sextic polynomial $R_{m,n}(X)$ for $u=x=0$,
we get the following example of the overlap
$\mathrm{Spl}_\bQ f_m(X)=\mathrm{Spl}_\bQ f_n(X)$ of the splitting fields.
\begin{proposition}
If $(m,n)\in\bZ^2$ is one of the $8$ pairs
\begin{align*}
&(-54,-12), (-18,-108), (-15,-675), (-14,-5292),\\
&(-13,-4563), (-12,-432), (-9,-27), (27,-3),
\end{align*}
then
$\mathrm{Spl}_\bQ f_m(X)=\mathrm{Spl}_\bQ f_n(X)$.
\end{proposition}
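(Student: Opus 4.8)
The plan is to invoke the reformulation of Theorem~\ref{thS3isom} recorded in Section~\ref{sepre}: for $m,n\in\bZ\setminus\{0\}$ with $m\neq n$, the splitting fields $\mathrm{Spl}_\bQ f_m(X)$ and $\mathrm{Spl}_\bQ f_n(X)$ coincide as soon as $R_{m,n}(X)$ has a rational root. So for each of the eight pairs it suffices to exhibit one such root, and, following the remark just before the statement, I would test the simplest candidate $u=0$.

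First I would specialize $R_{m,n}(X)$ at $X=0$. From the explicit expansion of $R_{m,n}(X)$ its constant term is
\begin{align*}
R_{m,n}(0)=-m^2\bigl(4m^2n+27m^2+108mn+729n\bigr)=-m^2\bigl(27m^2+n\,(2m+27)^2\bigr),
\end{align*}
using $4m^2+108m+729=(2m+27)^2$. Since $m\neq0$ we conclude that $R_{m,n}(0)=0$ precisely when
\begin{align*}
n=-\frac{27m^2}{(2m+27)^2}.
\end{align*}
One should check that $u=0$ is not a degenerate root (which would leave $n$ unconstrained): the factor $u^2+9u-3m$ takes the value $-3m\neq0$ at $u=0$, so the denominator $(u^3-2mu^2-9mu-2m^2-27m)^2=m^2(2m+27)^2$ appearing in Theorem~\ref{thS3isom} is nonzero, and the displayed value of $n$ is genuinely the one produced by $u=0$.

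It then remains to go through the list. For each $m\in\{-54,-18,-15,-14,-13,-12,-9,27\}$ one verifies that $(2m+27)^2\mid 27m^2$ and that $-27m^2/(2m+27)^2$ is the displayed $n$: here $2m+27$ equals $1,-1$ for $m=-13,-14$, equals $3,-3$ for $m=-12,-15$, equals $9,-9$ for $m=-9,-18$, and equals $81,-81$ for $m=27,-54$, giving $n=-4563,-5292,-432,-675,-27,-108,-3,-12$ respectively; in each case $n\in\bZ\setminus\{0\}$ and $n\neq m$, so Theorem~\ref{thS3isom} yields $\mathrm{Spl}_\bQ f_m(X)=\mathrm{Spl}_\bQ f_n(X)$. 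I do not expect a real obstacle here: the argument is a single substitution into $R_{m,n}(X)$ followed by a finite divisibility check, the only subtle point being the non-degeneracy of the root $u=0$ noted above. (The same computation in fact shows these are exactly the pairs coming from $u=0$, since $n\in\bZ$ forces $(2m+27)^2\mid 27m^2$ and, as $\gcd(2m+27,m)=\gcd(27,m)$, only the eight listed values of $m$ survive.)
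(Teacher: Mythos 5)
Your proposal is correct and follows essentially the same route as the paper: both exploit the vanishing of the constant term of $R_{m,n}(X)$ (the root $u=0$), which forces $n=-27m^2/(2m+27)^2$, and then check the eight pairs. The only cosmetic difference is that the paper enumerates all integral $m$ via the condition $243/(2m+27)\in\bZ$ and discards four degenerate values, whereas you verify the listed pairs directly; your extra care about the non-degeneracy of the root $u=0$ is a welcome touch the paper leaves implicit.
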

\begin{proof}
If $m$ and $n$ satisfy the equation
\begin{align}
4m^2n+27m^2+108mn+729n=0\label{eq4nm},
\end{align}
then $\mathrm{Spl}_\bQ f_m(X)=\mathrm{Spl}_\bQ f_n(X)$ because the constant term of $R_{m,n}(X)$ is $-m^2(4m^2n+27m^2+108mn+729n)$.
By (\ref{eq4nm}), we have
\[
n=-\frac{27m^2}{(2m+27)^2}=-\frac{1}{3}\left(\frac{9m}{2m+27}\right)^2
=-\frac{1}{3}\left(\frac{1}{2}\left(9-\frac{243}{2m+27}\right)\right)^2.
\]
Hence if $n\in\bZ$, then we have $243/(2m+27)\in\bZ$.
Then we finally see $m=-135$, $-54$, $-27$, $-18$, $-15$, $-14$, $-13$,
$-12$, $-9$, $0$, $27$, $108$.
We exclude the cases $m=-135$, $-27$, $0$, $108$ because we obtain
$(m,n)=(-135,-25/3)$, $(-27,-27)$, $(0,0)$, $(108,-16/3)$, respectively, in these cases.
\end{proof}
Next we put
\[
F_m(X,Y):=X^3-2mX^2Y-9mXY^2-m(2m+27)Y^3\in\bZ[X,Y]
\]
as in Theorem \ref{thmain} and study the cubic polynomial
\[
F_m(X,1)=X^3-2mX^2-9mX-m(2m+27)\in\bZ[X].
\]
The discriminant of $F_m(X,1)$ is $-m^2(4m+27)^3$.
\begin{lemma}
For $m\in\bZ\setminus\{0\}$, the splitting fields of $f_m(X)=X^3+mX+m$ and
of $F_m(X,1)=X^3-2mX^2-9mX-m(2m+27)$ over $\bQ$ coincide.
In particular, the polynomial $F_m(X,1)$ is reducible over $\bQ$
only for $m=-8$, and $F_{-8}(X,1)=(X+2)(X^2+14X+44)$.
\end{lemma}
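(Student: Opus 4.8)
The plan is to realize $F_m(X,1)$ as a Tschirnhaus transform of $f_m(X)$. Write $\theta_1,\theta_2,\theta_3$ for the roots of $f_m(X)=X^3+mX+m$, so that $\theta_1+\theta_2+\theta_3=0$, $\sum_{i<j}\theta_i\theta_j=m$ and $\theta_1\theta_2\theta_3=-m$, and set $\beta_i:=2\theta_i^2-3\theta_i+2m$ for $i=1,2,3$. The first step is to verify the polynomial identity $\prod_{i=1}^3(X-\beta_i)=F_m(X,1)$, i.e. that $\sum_i\beta_i=2m$, $\sum_{i<j}\beta_i\beta_j=-9m$ and $\beta_1\beta_2\beta_3=m(2m+27)$. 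The first two drop out of Newton's identities applied to the power sums $p_1=0$, $p_2=-2m$, $p_3=-3m$, $p_4=2m^2$ of $f_m$, and the third is obtained most painlessly by writing
\[
\beta_1\beta_2\beta_3=\prod_{i=1}^{3}\bigl(2\theta_i^2-3\theta_i+2m\bigr)=8\,f_m(r_1)\,f_m(r_2),
\]
where $r_1,r_2$ are the roots of $2Y^2-3Y+2m$ (so $r_1+r_2=3/2$, $r_1r_2=m$), and simplifying; this is the only step involving any real computation.

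Granting the identity, the substitution $T(X)=2X^2-3X+2m$ has coefficients in $\bQ$ and hence commutes with the action of $\mathrm{Gal}(\mathrm{Spl}_\bQ f_m(X)/\bQ)$; since $\beta_1,\beta_2,\beta_3\in\mathrm{Spl}_\bQ f_m(X)$ are exactly the roots of $F_m(X,1)$, this gives the inclusion $\mathrm{Spl}_\bQ F_m(X,1)\subseteq\mathrm{Spl}_\bQ f_m(X)$. For the opposite inclusion, first note that the $\beta_i$ are pairwise distinct for every $m\in\bZ\setminus\{0\}$: if $\beta_i=\beta_j$ with $i\neq j$ then, $f_m$ being separable, $2(\theta_i+\theta_j)=3$, so $\theta_k=-3/2$ for the remaining index $k$, forcing $f_m(-3/2)=0$ and hence $m=-27/4\notin\bZ$. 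Now assume $m\neq-8$; then $f_m(X)$ is irreducible by Lemma \ref{lem8}, so the Galois group permutes $\{\theta_1,\theta_2,\theta_3\}$, and therefore $\{\beta_1,\beta_2,\beta_3\}$, transitively. As the $\beta_i$ are distinct, $\beta_1$ has three distinct conjugates over $\bQ$, so $[\bQ(\beta_1):\bQ]=3$; since $\bQ(\beta_1)\subseteq\bQ(\theta_1)$ and $[\bQ(\theta_1):\bQ]=3$, we conclude $\bQ(\beta_1)=\bQ(\theta_1)$, whence $\theta_1\in\mathrm{Spl}_\bQ F_m(X,1)$, and likewise $\theta_2,\theta_3$. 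Thus $\mathrm{Spl}_\bQ f_m(X)=\mathrm{Spl}_\bQ F_m(X,1)$, and in passing $F_m(X,1)$ is irreducible. The remaining case $m=-8$ is handled by hand: $F_{-8}(X,1)=X^3+16X^2+72X+88=(X+2)(X^2+14X+44)$, the quadratic factor has discriminant $20$, and both splitting fields therefore equal $\bQ(\sqrt5)$ (using Lemma \ref{lem8} for $f_{-8}$).

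The final assertion follows formally: an irreducible cubic over $\bQ$ has splitting field of degree $3$ or $6$, a reducible one of degree $1$ or $2$, so the equality of splitting fields forces $F_m(X,1)$ and $f_m(X)$ to be simultaneously reducible or irreducible; by Lemma \ref{lem8} this happens reducibly exactly when $m=-8$, with the factorization displayed above. The only step that is not routine is guessing the transformation $X\mapsto 2X^2-3X+2m$; once it is found, the rest is the symmetric-function identity together with elementary degree counting. (Alternatively one verifies over $\bQ(s)$ that $2X^2-3X+2s$ carries $f_s(X)$ to $F_s(X,1)$ and specializes $s\mapsto m$; the computation is the same.)
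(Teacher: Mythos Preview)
Your proof is correct and follows essentially the same approach as the paper: both hinge on the Tschirnhausen transformation $X\mapsto 2X^2-3X+2m$ carrying the roots of $f_m$ to those of $F_m(X,1)$. The only difference is that the paper records the inverse transformation $Z\mapsto (2Z^2-(4m+9)Z-6m)/(4m+27)$ to get the reverse inclusion of splitting fields immediately, whereas you replace this by the (equally short) degree argument via distinctness of the $\beta_i$ and irreducibility of $f_m$.
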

\begin{proof}
The assertion follows directly from the fact that $f_m(X)$ can be transformed to $F_m(Z,1)$
by the following Tschirnhausen transformation:
\begin{align*}
F_m(Z,1)=\mathrm{Resultant}_X \bigl(f_m(X), Z-(2X^2-3X+2m)\bigr).
\end{align*}
The inverse transformation is also given as
\begin{align*}
f_m(X)=\mathrm{Resultant}_Z \Bigl(F_m(Z,1), X-\frac{2Z^2-(4m+9)Z-6m}{4m+27}\Bigr).
\end{align*}
It follows from Lemma \ref{lem8} that for $m\in\bZ\setminus\{0\}$,
the polynomial $F_m(X,1)$ is reducible over $\bQ$ if and only if $m=-8$.
\end{proof}
\section{Proof of Theorem \ref{thmain} and of Theorem \ref{thcomp}}\label{seproof}

We use Theorem \ref{thS3isom} in the case where $K=\bQ$.
Take
\[
F_m(X,Y)=X^3-2mX^2Y-9mXY^2-m(2m+27)Y^3\in\bZ[X,Y].
\]
For $m\in\bZ\setminus\{0\}$, we assume that there exists $n\in\bZ\setminus\{0\}$,
$n\neq m$, such that $\mathrm{Spl}_\bQ f_n(X)=\mathrm{Spl}_\bQ f_m(X)$.
By Theorem \ref{thS3isom}, there exists $u\in\bQ$ such that
\[
n=\frac{m(u^2+9u-3m)^3}{F_m(u,1)^2}
\]
which is equivalent to the condition that $R_{m,n}(u)=m(u^2+9u-3m)^3-n F_m(u,1)^2=0$. 

We write $u=x/y$ as a quotient of relatively prime integers $x,y\in\bZ$ with $y>0$.
Then we have 
\begin{align}
n&=\frac{m(x^2+9xy-3my^2)^3}{F_m(x,y)^2}\label{eqmm}\\
&=m+\frac{m(4m+27)y(x^2+9xy+27y^2+my^2)(x^3-mx^2y-m^2y^3)}{F_m(x,y)^2}\in\bZ.\nonumber
\end{align}
We put
\[
\lambda:=F_m(x,y).
\]
Then $\lambda^2$ divides $m(4m+27)y(x^2+9xy+27y^2+my^2)(x^3-mx^2y-m^2y^3)$.
\begin{proof}[Proof of Theorem \ref{thmain}]
In order to prove Theorem \ref{thmain},
we should show that $\lambda^2$ divides $m^3(4m+27)^5$.
Then it follows by Thue's theorem that for a fixed $m\in\bZ$, there exist only finitely
many $n\in\bZ$ such that $\mathrm{Spl}_\bQ f_m(X)=\mathrm{Spl}_\bQ f_n(X)$ because
$\lambda$ runs over a finite number of integers.

We will use the standard method via resultant and the Sylvester matrix (cf.
\cite[Theorem 6.1]{Lan78}, \cite[Chapter 8, Section 5]{Lan83} and \cite[Section 1.3]{SWP08}).
Put 
\[
g(u):=m(u^2+9u-3m)^3,\qquad h(u):=F_m(u,1)^2.
\]
We take the resultant
\begin{align}
r_m:=\mathrm{Resultant}_u(g(u),h(u))=m^{12}(4m+27)^{18}\in\bZ\label{RR}
\end{align}
of $g(u)$ and $h(u)$ with respect to $u$.

Now we expand the polynomials $g(u)$ and $h(u)$ with respect to $u$ as
\begin{align*}
g(u)=\sum_{i=0}^6 a_i u^{6-i}&=mu^6+27mu^5-9(m-27)mu^4-81m(2m-9)u^3\\
&\quad\,+27(m-27)m^2u^2+243m^3u-27m^4,\\
h(u)=\sum_{i=0}^6 b_i u^{6-i}&=u^6-4mu^5+2m(2m-9)u^4+2m(16m-27)u^3\\
&\quad\,+m^2(8m+189)u^2+18m^2(2m+27)u+m^2(2m+27)^2.
\end{align*}
Then the resultant $r_m$ is also given as the following determinant:
\[
r_m={\small
\left|
\begin{array}{llllllllllll}
 a_0 & a_1 & a_2 & a_3 & a_4 & a_5 & a_6 & 0 & 0 & 0 & 0 & g(u)u^5 \\
 0 & a_0 & a_1 & a_2 & a_3 & a_4 & a_5 & a_6 & 0 & 0 & 0 & g(u)u^4 \\
 0 & 0 & a_0 & a_1 & a_2 & a_3 & a_4 & a_5 & a_6 & 0 & 0 & g(u)u^3 \\
 0 & 0 & 0 & a_0 & a_1 & a_2 & a_3 & a_4 & a_5 & a_6 & 0 & g(u)u^2 \\
 0 & 0 & 0 & 0 & a_0 & a_1 & a_2 & a_3 & a_4 & a_5 & a_6 & g(u)u \\
 0 & 0 & 0 & 0 & 0 & a_0 & a_1 & a_2 & a_3 & a_4 & a_5 & g(u) \\
 b_0 & b_1 & b_2 & b_3 & b_4 & b_5 & b_6 & 0 & 0 & 0 & 0 & h(u)u^5 \\
 0 & b_0 & b_1 & b_2 & b_3 & b_4 & b_5 & b_6 & 0 & 0 & 0 & h(u)u^4 \\
 0 & 0 & b_0 & b_1 & b_2 & b_3 & b_4 & b_5 & b_6 & 0 & 0 & h(u)u^3 \\
 0 & 0 & 0 & b_0 & b_1 & b_2 & b_3 & b_4 & b_5 & b_6 & 0 & h(u)u^2 \\
 0 & 0 & 0 & 0 & b_0 & b_1 & b_2 & b_3 & b_4 & b_5 & b_6 & h(u)u \\
 0 & 0 & 0 & 0 & 0 & b_0 & b_1 & b_2 & b_3 & b_4 & b_5 & h(u)
\end{array}
\right|
};
\]
hence we have
\begin{align}
r_m=m^9(4m+27)^{13}\Bigl(g(u)p(u)+h(u)q(u)\Bigr)\label{RR11}
\end{align}
where
\begin{align*}
p(u)&=-15u^5+3(19m+9)u^4-(48m^2-176m+27)u^3-3m(4m^2+125m-135)u^2\\
&\quad\, -6m(26m^2+339m+162)u-m(28m^3+636m^2+3591m-1458),\\
q(u)&=m\big(15u^5+3(m+126)u^4-(68m-2943)u^3-(26m^2+1809m-5103)u^2\\
&\quad\, +9(5m^2-810m-1458)u+67m^3+2538m^2+11664m+19683\big).
\end{align*}
By (\ref{RR}) and (\ref{RR11}), we have
\begin{align*}
g(u)p(u)+h(u)q(u)=m^3(4m+27)^5.\label{hpfq}
\end{align*}

Put
\begin{align*}
G(x,y):=y^6\cdot g(x/y),\quad P(x,y):=y^5\cdot p(x/y),\quad Q(x,y):=y^5\cdot q(x/y).
\end{align*}
Then
\begin{align*}
G(x,y)P(x,y)+F_m(x,y)^2 Q(x,y)=m^3(4m+27)^5y^{11}.
\end{align*}
Hence we get
\begin{align*}
&\frac{G(x,y)P(x,y)}{\lambda^2}+Q(x,y)=\frac{m^3(4m+27)^5y^{11}}{\lambda^2}\in\bZ.
\end{align*}

It follows from $\mathrm{gcd}(x,y)=1$ and $\lambda=x^3-2mx^2y-9mxy^2-m(2m+27)y^3$ 
that $\mathrm{gcd}(\lambda,y)=1$. 
Hence we conclude that $\lambda^2=F_m(x,y)^2$ divides $m^3(4m+27)^5$. 

Conversely if $n\in\bQ$ is given by (\ref{eqnm}) in Theorem \ref{thmain} then,
by (\ref{eqmm}), the sextic polynomial $R_{m,n}(X)=m(X^2+9X-3m)^3-n F_m(X,1)^2$ has a
root $u\in\bQ$, and hence $\mathrm{Spl}_\bQ f_m(X)=\mathrm{Spl}_\bQ f_n(X)$ except for 
the cases of $n=0$ and of $n=-27/4$.
We note that for $m\in\bZ\setminus\{0\}$
if $m\neq 3c(c+3)$ for any $c\in\bZ$ (resp. $m\neq -8$) then
$n\neq 0$ (resp. $n\neq -27/4$) by Lemma \ref{lem3c}.
\end{proof}
\begin{proof}[Proof of Theorem \ref{thcomp}]

We give a proof of Theorem \ref{thcomp} by using Theorem \ref{thmain}, Theorem \ref{thS3} and
Lemmas \ref{lem8}, \ref{lemc3} and \ref{lemDTmm} in the previous section. 

We first treat the case where $m\neq n$. 
From the assumption, there exists $u\in\bQ$ such that 
$R_{m,n}(u)=m(u^2+9u-3m)^3-n F_m(u,1)^2=0$. 
We write $u=x/y$ with $x,y\in\bZ$, $y>0$ and $\mathrm{gcd}(x,y)=1$. 
Then, by (\ref{eqmm}), $R_{m,n}(u)=0$ if and only if $(x,y)\in\bZ^2$ 
satisfies the condition (\ref{eqnm}). 

We note that the sextic polynomial $R_{m,n}(X)$ has no multiple roots by (\ref{discRmn}). 
Hence it follows from Theorem \ref{thS3} that the number of the set 
$\{u\in\bQ\,|\, R_{m,n}(u)=0\}$ equals exactly $\mu$ where $\mu=1$, $3$, $2$ 
according to $\mathrm{Gal}_\bQ f_m(X)\cong S_3$, $C_3$, $C_2$, respectively 
(see the number of $1$'s in $\mathrm{DT}(R_{m,n})$ as on Table $1$). 

We also see that by the assumption $m\neq n$, $(1,0)\in\bZ^2$ dose not hold the 
condition (\ref{eqnm}). 
Thus the number of the primitive solutions $(x,y)\in\bZ^2\setminus\{(-1,0)\}$ with $y\geq 0$ 
to $(*)$ which satisfies (\ref{eqnm}) 
is exactly $\mu$ and the assertion follows when $m\neq n$. 

In the case where $n=m$, we recall that $R_{m,m}(X)$ becomes quintic and 
$(1,0)\in\bZ^2$ corresponds the vanishing root. 
By Lemma \ref{lemDTmm}, the number of the set $\{u\in\bQ\,|\, R_{m,n}(u)=0\}$ 
equals exactly $\mu-1$ and $(1,0)\in\bZ^2$ satisfies the condition (\ref{eqnm}). 
Hence the number of the primitive solutions $(x,y)\in\bZ^2\setminus\{(-1,0)\}$ with $y\geq 0$ 
to $(*)$ which satisfies (\ref{eqnm}) also equals $\mu$.  

For $m=n$, the corresponding primitive solutions $(x,y)\in\bZ^2\setminus\{(-1,0)\}$ satisfy 
\begin{align*}
\varphi_m(x,y):=y(x^2+9xy+27y^2+my^2)(x^3-mx^2y-m^2y^3)=0.
\end{align*}

If $\mathrm{Gal}_\bQ f_m(X)\cong C_3$ then by Lemma \ref{lemc3} there exists $b\in\bZ$
such that $m=-b^2-b-7$, and hence we have
\begin{align*}
&\varphi_{-b^2-b-7}(x,y)\\
&=y\bigl(x-(b-4)y\bigr)\bigl(x+(b+5)y\bigr)\bigl(x^3+(b^2+b+7)x^2y-(b^2+b+7)^2y^3\bigr).
\end{align*}
Thus, the corresponding three primitive solutions are $(1,0)$, $(b-4,1)$, $(-b-5,1)\in\bZ^2$. 

If $\mathrm{Gal}_\bQ f_m(X)\cong C_2$ then $m=-8$ by Lemma \ref{lem8}.
We see
\[
\varphi_{-8}(x,y)
=y(x+4y)(x^2+9xy+19y^2)(x^2+4xy-16y^2).
\]
Hence the corresponding two primitive solutions are given by $(1,0)$ and $(-4,1)$. 
\end{proof}
\section{Numerical example $1$}\label{seex1}

We give some numerical examples of Theorem \ref{thmain}, Theorem \ref{thcomp} and
Corollary \ref{corNM}.
By using PARI/GP \cite{PARI2}, we computed for $m\in\bZ\setminus\{0\}$
in the range $-10\leq m\leq 5$, all primitive solutions $(x,y)\in\bZ^2\setminus\{(-1,0)\}$
with $y\geq 0$ to $F_m(x,y)=\lambda$ where $\lambda^2$ is a divisor of $m^3(4m+27)^5$
as on the following table (Table $2$).
We checked Table $2$ by Mathematica \cite{Wol07}.
By the result on Table $2$, we get Corollary \ref{cor105} which we stated in Section
\ref{seintro}, because all integers $n\in\bZ$ which satisfy $\mathrm{Spl}_\bQ f_n(X)=
\mathrm{Spl}_\bQ f_m(X)$ appear on Table $2$.

\begin{center}
{\small
\vspace*{1mm}
{\rm Table} $2$\vspace*{4mm}\\
}
{\scriptsize
\begin{tabular}{|c|c|c|c|} \hline
$m$ & $\lambda$ & $F_m(x,y)=\lambda$ & $n$ \\\hline
 & $1$ & $(1,0)$ & $-10$\\\cline{2-4}
 & $-1$ & $(-1,1)$ & $-106480$\\\cline{2-4}
 & $-5$ & $(-5,1)$ & $-400$\\\cline{2-4}
 & $13^2$ & $(-11,1)$ & $-640/13$\\\cline{2-4}
$-10$ & $-13^2$ & $(-9,2)$ & $-270/13$\\\cline{2-4}
 & $2\cdot13^2$ & $(2,1)$ & $-160/13$\\\cline{2-4}
 & $-5\cdot13^2$ & $(-125,9)$ & $-90792400/13$\\\cline{2-4}
 & $-5\cdot13^2$ & $(-5,4)$ & $-6250/13$\\\cline{2-4}
 & $2\cdot5\cdot13^2$ & $(-20,3)$ & $-100/13$\\\hline\hline
 & $1$ & $(1,0)$ & $-9$\\\cline{2-4}
 & $1$ & $(-5,1)$ & $-3087$\\\cline{2-4}
 & $3^3$ & $(-6,1)$ & $-9$\\\cline{2-4}
 & $-3^3$ & $(-3,1)$ & $-9$\\\cline{2-4}
$-9$ & $-3^3$ & $(-12,1)$ & $-3087$\\\cline{2-4}
 & $-3^3$ & $(-3,2)$ & $-3087$\\\cline{2-4}
 & $3^4$ & $(0,1)$ & $-27$\\\cline{2-4}
 & $3^4$ & $(-9,1)$ & $-27$\\\cline{2-4}
 & $-3^4$ & $(-9,2)$ & $-27$\\\hline\hline
 & $1$ & $(1,0)$ & $-8$\\\cline{2-4}
 & $-2^3$ & $(-4,1)$ & $-8$\\\cline{2-4}
 & $2^4$ & $(-6,1)$ & $[\ -27/4\ ]$\\\cline{2-4}
 & $5^2$ & $(-7,1)$ & $-64/5$\\\cline{2-4}
$-8$ & $-5^2$ & $(-9,2)$ & $-216/5$\\\cline{2-4}
 & $2^3\cdot5^2$ & $(-16,3)$ & $-64/5$\\\cline{2-4}
 & $-2^3\cdot5^2$ & $(-12,1)$ & $-216/5$\\\cline{2-4}
 & $2^4\cdot5^2$ & $(-26,3)$ & $-6859/20$\\\cline{2-4}
 & $2^4\cdot5^2$ & $(-34,7)$ & $-6859/20$\\\hline\hline
 & $1$ & $(1,0)$ & $-7$\\\cline{2-4}
 & $1$ & $(-3,1)$ & $-189$\\\cline{2-4}
 & $1$ & $(-5,1)$ & $-7$\\\cline{2-4}
 & $1$ & $(-6,1)$ & $-189$\\\cline{2-4}
 & $1$ & $(-41,9)$ & $-1588867$\\\cline{2-4}
\raisebox{-1.6ex}[0cm][0cm]{$-7$} &
   $-1$ & $(-4,1)$ & $-7$\\\cline{2-4}
 & $-1$ & $(-9,2)$ & $-189$\\\cline{2-4}
 & $-1$ & $(-25,4)$ & $-1588867$\\\cline{2-4}
 & $-1$ & $(-16,5)$ & $-1588867$\\\cline{2-4}
 & $7$ & $(-14,3)$ & $-49$\\\cline{2-4}
 & $-7$ & $(-7,1)$ & $-49$\\\cline{2-4}
 & $-7$ & $(-7,2)$ & $-49$\\\hline\hline
 & $1$ & $(1,0)$ & $-6$\\\cline{2-4}
 & $-1$ & $(-13,3)$ & $48000$\\\cline{2-4}
\raisebox{-1.6ex}[0cm][0cm]{$-6$} & $2$ & $(-4,1)$ & $12$\\\cline{2-4}
 & $3^2$ & $(-3,1)$ & $[\ 0\ ]$\\\cline{2-4}
 & $-3^2$ & $(-9,2)$ & $54$\\\cline{2-4}
 & $-2\cdot3^2$ & $(-6,1)$ & $[\ 0\ ]$\\\hline\hline
 & $1$ & $(1,0)$ & $-5$\\\cline{2-4}
 & $1$ & $(-4,1)$ & $625$\\\cline{2-4}
$-5$ & $7^2$ & $(-1,1)$ & $-5/7$\\\cline{2-4}
 & $-7^2$ & $(-9,2)$ & $135/7$\\\cline{2-4}
 & $5\cdot7^2$ & $(-10,3)$ & $25/7$\\\hline\hline
 & $1$ & $(1,0)$ & $-4$\\\cline{2-4}
$-4$ & $-2^2$ & $(-4,1)$ & $128$\\\cline{2-4}
 & $11^2$ & $(1,1)$ & $-32/11$\\\hline
\end{tabular}
\hspace*{3mm}
\begin{tabular}{|c|c|c|c|} \hline
$m$  & $\lambda$ & $F_m(x,y)=\lambda$ & $n$ \\\hline
 & $-11^2$ & $(-9,2)$ & $108/11$\\\cline{2-4}
 & $2^2\cdot11^2$ & $(-8,3)$ & $16/11$\\\cline{2-4}
$-4$ & $2^2\cdot11^2$ & $(-26,7)$ & $9826/11$\\\cline{2-4}
 & $-2^2\cdot11^2$ & $(-10,1)$ & $-2/11$\\\cline{2-4}
 & $-2^2\cdot11^2$ & $(-1384,365)$ & $206613902738896/11$\\\hline\hline
 & $1$ & $(1,0)$ & $-3$\\\cline{2-4}
 & $3^2$ & $(-3,1)$ & $27$\\\cline{2-4}
\raisebox{-1.6ex}[0cm][0cm]{$-3$} & $5^2$ & $(-2,1)$ & $3/5$\\\cline{2-4}
 & $3^2\cdot5^2$ & $(3,1)$ & $-27/5$\\\cline{2-4}
 & $3^2\cdot5^2$ & $(-24,7)$ & $35937/5$\\\cline{2-4}
 & $-3^2\cdot5^2$ & $(-9,2)$ & $27/5$\\\hline\hline
 & $1$ & $(1,0)$ & $-2$\\\cline{2-4}
 & $1$ & $(-3,1)$ & $3456$\\\cline{2-4}
 & $19^2$ & $(5,1)$ & $-128/19$\\\cline{2-4}
$-2$ & $19^2$ & $(-259,85)$ & $196710433792/19$\\\cline{2-4}
 & $-19^2$ & $(-9,2)$ & $54/19$\\\cline{2-4}
 & $2\cdot19^2$ & $(-4,3)$ & $4/19$\\\cline{2-4}
 & $-2\cdot19^2$ & $(-22,7)$ & $16384/19$\\\hline\hline
 & $1$ & $(1,0)$ & $-1$\\\cline{2-4}
 & $23^2$ & $(7,1)$ & $-125/23$\\\cline{2-4}
\raisebox{-1.6ex}[0cm][0cm]{$-1$} & $23^2$ & $(-2,3)$ & $1/23$\\\cline{2-4}
 & $23^2$ & $(-11,5)$ & $2197/23$\\\cline{2-4}
 & $-23^2$ & $(-9,2)$ & $27/23$\\\cline{2-4}
 & $-23^2$ & $(-42,17)$ & $4492125/23$\\\hline\hline
 & $1$ & $(1,0)$ & $1$\\\cline{2-4}
 & $1$ & $(5,1)$ & $300763$\\\cline{2-4}
\raisebox{-1.6ex}[0cm][0cm]{$1$} & $31^2$ & $(11,1)$ & $343/31$\\\cline{2-4}
 & $-31^2$ & $(-9,2)$ & $-27/31$\\\cline{2-4}
 & $-31^2$ & $(2,3)$ & $1/31$\\\cline{2-4}
 & $-31^2$ & $(24,5)$ & $132651/31$\\\hline\hline
 & $1$ & $(1,0)$ & $2$\\\cline{2-4}
 & $-1$ & $(15,2)$ & $208974222$\\\cline{2-4}
 & $-7^2$ & $(-1,1)$ & $-16/7$\\\cline{2-4}
 & $2\cdot5^2$ & $(8,1)$ & $8788/5$\\\cline{2-4}
$2$ & $-2\cdot5^2$ & $(-2,1)$ & $-32/5$\\\cline{2-4}
 & $-2\cdot7^2$ & $(6,1)$ & $864/7$\\\cline{2-4}
 & $5^2\cdot7^2$ & $(13,1)$ & $1024/35$\\\cline{2-4}
 & $-5^2\cdot7^2$ & $(-9,2)$ & $-54/35$\\\cline{2-4}
 & $-2\cdot5^2\cdot7^2$ & $(4,3)$ & $4/35$\\\hline\hline
 & $1$ & $(1,0)$ & $3$\\\cline{2-4}
 & $13^2$ & $(49,5)$ & $114818259/13$\\\cline{2-4}
$3$ & $-13^2$ & $(2,1)$ & $3/13$\\\cline{2-4}
 & $3^2\cdot13^2$ & $(15,1)$ & $729/13$\\\cline{2-4}
 & $-3^2\cdot13^2$ & $(-9,2)$ & $-27/13$\\\hline\hline
 & $1$ & $(1,0)$ & $4$\\\cline{2-4}
 & $2^2$ & $(12,1)$ & $3456000$\\\cline{2-4}
$4$ & $43^2$ & $(17,1)$ & $4000/43$\\\cline{2-4}
 & $-43^2$ & $(-9,2)$ & $-108/43$\\\cline{2-4}
 & $-2^2\cdot43^2$ & $(8,3)$ & $16/43$\\\hline\hline
 & $1$ & $(1,0)$ & $5$\\\cline{2-4}
\raisebox{-1.6ex}[0cm][0cm]{$5$} & $47^2$ & $(19,1)$ & $6655/47$\\\cline{2-4}
 & $-47^2$ & $(-9,2)$ & $-135/47$\\\cline{2-4}
 & $-5\cdot47^2$ & $(10,3)$ & $25/47$\\\hline
\end{tabular}
}
\end{center}

\section{Numerical example $2$}\label{seex2}

By Theorem \ref{thS3isom}, we evaluated those integers $m,n\in\bZ\setminus\{0\}$ in the range
\begin{align*}
&{\rm (i)} -6\leq m<n\leq 2\times 10^5,\ \mathrm{i.e.}\ \mathrm{Spl}_\bQ f_m(X)
\ \mathrm{is\ totally\ imaginary},\\
&{\rm (ii)} -2\times 10^5\leq n<m\leq -7,\ \mathrm{i.e.}\ \mathrm{Spl}_\bQ f_m(X)
\ \mathrm{is\ totally\ real}
\end{align*}
for which $R_{m,n}(X)$ has a linear factor over $\bQ$, namely,
$\mathrm{Spl}_\bQ f_m(X)=\mathrm{Spl}_\bQ f_n(X)$. 
Note that we should check only for $m,n\in\bZ\setminus\{0\}$ which satisfy that 
$(4m+27)(4n+27)$ is square. 
We also computed the corresponding primitive solutions $(x,y)\in\bZ^2$
to $F_m(x,y)=\lambda$ and to $F_n(x,y)=\lambda'$ with respect to $\{m,n\}$ and to
$\{n,m\}$ respectively.
The result of the case (i) (resp. (ii)) is given on Table $3$ (resp. on Table $4$).

\begin{center}
{\small
\vspace*{3mm}
{\rm Table} $3$\vspace*{4mm}\\
}
{\scriptsize
\begin{tabular}{|c|c|c|c|c|c|}\hline
$m$ & $n$ & $F_m(x,y)=\lambda$ & $\lambda$ & $F_n(x,y)=\lambda'$ & $\lambda'$\\\hline
$-6$ & $12$ & $(-4,1)$ & $2$ & $(-2,1)$ & $-2^2\cdot5^3$\\\hline
$-6$ & $54$ & $(-9,2)$ & $-3^2$ & $(-9,2)$ & $-3^{10}$\\\hline
$-6$ & $48000$ & $(-13,3)$ & $-1$ & $(-140,3)$ & $-2^6\cdot5^3\cdot11^3\cdot23^3$\\\hline
$-5$ & $625$ & $(-4,1)$ & $1$ & $(5,1)$ & $-5^3\cdot19^3$\\\hline
$-4$ & $128$ & $(-4,1)$ & $-2^2$ & $(-8,1)$ & $-2^7\cdot7^3$\\\hline
$-3$ & $27$ & $(-3,1)$ & $3^2$ & $(0,1)$ & $-3^7$\\\hline
$-2$ & $3456$ & $(-3,1)$ & $1$ & $(36,1)$ & $-2^6\cdot3^{12}$\\\hline
$12$ & $54$ & $(18,1)$ & $-2^2\cdot3^2\cdot5^3$ & $(36,1)$ & $-2\cdot3^{10}$\\\hline
$12$ & $48000$ & $(28,1)$ & $-2^2\cdot5^3$ & $(1640,1)$
& $-2^7\cdot5^3\cdot11^3\cdot23^3$\\\hline
$54$ & $48000$ & $(117,1)$ & $3^{10}$ & $(-3420,1)$
& $-2^6\cdot3^2\cdot5^3\cdot11^3\cdot23^3$\\\hline
\end{tabular}
}
\end{center}

\begin{center}
{\small \vspace*{3mm}
{\rm Table} $4$\vspace*{4mm}\\
}

{\scriptsize
\begin{tabular}{|c|c|c|c|c|c|} \hline
$m$ & $n$ & $F_m(x,y)=\lambda$ & $\lambda$ & $F_n(x,y)=\lambda'$ & $\lambda'$\\\hline
$-7$ & $-49$ & $(-7,1)$ & $-7$ & $(-35,2)$ & $7^2\cdot13^3$\\\hline
$-7$ & $-49$ & $(-7,2)$ & $-7$ & $(28,1)$ & $7^2\cdot13^3$\\\hline
$-7$ & $-49$ & $(-14,3)$ & $7$ & $(-7,3)$ & $-7^2\cdot13^3$\\\hline
$-7$ & $-189$ & $(-3,1)$ & $1$ & $(36,1)$ & $3^{12}$\\\hline
$-7$ & $-189$ & $(-6,1)$ & $1$ & $(-45,1)$ & $3^{12}$\\\hline
$-7$ & $-189$ & $(-9,2)$ & $-1$ & $(-9,2)$ & $-3^{12}$\\\hline
$-9$ & $-27$ & $(0,1)$ & $3^4$ & $(9,1)$ & $3^8$\\\hline
$-9$ & $-27$ & $(-9,1)$ & $3^4$ & $(-18,1)$ & $3^8$\\\hline
$-9$ & $-27$ & $(-9,2)$ & $-3^4$ & $(-9,2)$ & $-3^8$\\\hline
$-9$ & $-3087$ & $(-5,1)$ & $1$ & $(14,1)$ & $7^3\cdot37^3$\\\hline
$-9$ & $-3087$ & $(-12,1)$ & $-3^3$ & $(-231,2)$ & $-3^3\cdot7^3\cdot37^3$\\\hline
$-9$ & $-3087$ & $(-3,2)$ & $-3^3$ & $(273,1)$ & $3^3\cdot7^3\cdot37^3$\\\hline
$-10$ & $-400$ & $(-5,1)$ & $-5$ & $(-10,1)$ & $-2^3\cdot5^2\cdot11^3$\\\hline
$-10$ & $-106480$ & $(-1,1)$ & $-1$ & $(-638,1)$ & $2^3\cdot11^3\cdot181^3$\\\hline
$-12$ & $-54$ & $(-6,1)$ & $2^2\cdot3^2$ & $(0,1)$ & $2\cdot3^7$\\\hline
$-12$ & $-432$ & $(0,1)$ & $2^2\cdot3^2$ & $(36,1)$ & $2^4\cdot3^{10}$\\\hline
$-12$ & $-71874$ & $(-18,1)$ & $2^2\cdot3^2$ & $(-1584,1)$
& $2\cdot3^{10}\cdot11^3\cdot13^3$\\\hline
$-13$ & $-4563$ & $(0,1)$ & $13$ & $(117,1)$ & $3^{12}\cdot13^2$\\\hline
$-13$ & $-4563$ & $(-39,2)$ & $5^3\cdot13$ & $(-819,2)$ & $3^{12}\cdot5^3\cdot13^2$\\\hline
$-13$ & $-4563$ & $(-39,7)$ & $-5^3\cdot13$ & $(-234,7)$ & $-3^{12}\cdot5^3\cdot13^2$\\\hline
$-14$ & $-5292$ & $(0,1)$ & $-2\cdot7$ & $(-126,1)$ & $-2^2\cdot3^{12}\cdot7^2$\\\hline
$-15$ & $-675$ & $(0,1)$ & $-3^2\cdot5$ & $(-45,1)$ & $3^{10}\cdot5^2$\\\hline
$-15$ & $-3645$ & $(-6,1)$ & $3^2$ & $(27,1)$ & $-3^{10}\cdot7^3$\\\hline
$-16$ & $-6750$ & $(-6,1)$ & $-2^3$ & $(-45,1)$ & $-3^{12}\cdot5^3$\\\hline
\end{tabular}
}
\end{center}

\begin{center}
{\scriptsize
\begin{tabular}{|c|c|c|c|c|c|} \hline
$m$ & $n$ & $F_m(x,y)=\lambda$ & $\lambda$ & $F_n(x,y)=\lambda'$ & $\lambda'$\\\hline
$-18$ & $-108$ & $(0,1)$ & $-2\cdot3^4 $ & $(-18,1)$ & $ 2^2\cdot3^8$\\\hline
$-18$ & $-288$ & $(-6,1)$ & $-2\cdot3^3$ & $(-12,1)$ & $-2^5\cdot3^3\cdot5^3$\\\hline
$-27$ & $-3087$ & $(-45,1)$ & $ 3^8 $ & $(-504,1)$ & $ 3^4\cdot7^3\cdot37^3$\\\hline
$-27$ & $-3087$ & $(9,4)$ & $ 3^8 $ & $(315,4)$ & $ 3^4\cdot7^3\cdot37^3$\\\hline
$-27$ & $-3087$ & $(-36,5)$ & $-3^8 $ & $(-189,5)$ & $-3^4\cdot7^3\cdot37^3$\\\hline
$-36$ &	$-147456$ & $(3,1)$ & $3^3$ & $(528,1)$ & $2^{12}\cdot3^3\cdot71^3$\\\hline
$-38$ & $-8208$ & $(3,1)$ & $-5^3 $ & $(-126,1)$ & $2^3\cdot3^{15}$\\\hline
$-45$ & $-16875$ & $(-9,1)$ & $ 3^4 $ & $(90,1)$ & $-3^8\cdot5^3\cdot7^3$\\\hline
$-49$ & $-189$ & $(-63,1)$ & $ 7^2\cdot13^3 $ & $(-126,1)$ & $ 3^{12}\cdot7$\\\hline
$-49$ & $-189$ & $(-42,5)$ & $-7^2\cdot13^3 $ & $(-63,5)$ & $-3^{12}\cdot7$\\\hline
$-49$ & $-189$ & $(21,4)$ & $ 7^2\cdot13^3 $ & $(63,4)$ & $ 3^{12}\cdot7$\\\hline
$-54$ & $-71874$ & $(9,2)$ & $ 3^6 $ & $(693,2)$ & $ 3^9\cdot11^3\cdot13^3$\\\hline
$-54$ & $-432$ & $(-9,1)$ & $-3^6 $ & $(-18,1)$ & $-2^3\cdot3^9$\\\hline
$-68$ & $-918$ & $(6,1)$ & $ 2^2\cdot7^3 $ & $(36,1)$ & $ 2\cdot3^{12}$\\\hline
$-88$ & $-2376$ & $(-12,1)$ & $ 2^3\cdot5^3 $ & $(36,1)$ & $-2^3\cdot3^{12}$\\\hline
$-96$ & $-39366$ & $(-12,1)$ & $-2^5\cdot3^2 $ & $(-162,1)$ & $-2\cdot3^{13}\cdot7^3$\\\hline
$-108$ & $-288$ & $(9,1)$ & $ 3^8 $ & $(18,1)$ & $ 2^3\cdot3^4\cdot5^3$\\\hline
$-135$ & $-46305$ & $(9,1)$ & $ 3^6 $ & $(252,1)$ & $ 3^6\cdot7^3\cdot19^3$\\\hline
$-270$ & $-1440$ & $(-18,1)$ & $-2\cdot3^8 $ & $(-36,1)$ & $-2^5\cdot3^4\cdot7^3$\\\hline
$-363$ & $-4125$ & $(-22,1)$ & $ 5^3\cdot11^2 $ & $(55,1)$ & $-5^3\cdot11\cdot17^3$\\\hline
$-368$ & $-1058$ & $(46,3)$ & $-2^3\cdot17^3\cdot23 $ & $(-115,3)$ & $ 23^2\cdot29^3$\\\hline
$-400$ & $-106480$ & $(35,2)$ & $ 5^2\cdot11^3 $ & $(715,2)$ & $ 5\cdot11^3\cdot181^3$\\\hline
$-432$ & $-71874$ & $(18,1)$ & $-2^3\cdot3^6 $ & $(-297,1)$ & $ 3^6\cdot11^3\cdot13^3$\\\hline
$-675$ & $-3645$ & $(45,2)$ & $-3^9\cdot5^2 $ & $(-135,2)$ & $ 3^9\cdot5\cdot7^3$\\\hline
$-1404$ & $-4992$ & $(36,1)$ & $ 2^2\cdot3^{10} $ & $(72,1)$ & $ 2^7\cdot3^2\cdot17^3$\\\hline
$-1890$ & $-8400$ & $(-45,1)$ & $-3^{10}\cdot5 $ & $(-90,1)$
& $-2^3\cdot3^2\cdot5^2\cdot19^3$\\\hline
$-2784$ & $-45414$ & $(348,7)$ & $-2^5\cdot3^2\cdot23^3\cdot29 $ & $(-1566,7)$
& $2\cdot3^7\cdot29^2\cdot31^3$\\\hline
$-4913$ & $-103823$ & $(68,1)$ & $ 5^3\cdot17^3 $ & $(329,1)$
& $ 23^3\cdot47^3$\\\hline
$-14040$ & $-54080$ & $(117,1)$ & $ 3^{12}\cdot13 $ & $(234,1)$
& $2^3\cdot13^2\cdot53^3$\\\hline
$-15498$ & $-64288$ & $(-126,1)$ & $-2\cdot3^{12}\cdot7$ & $(-252,1)$
& $-2^5\cdot5^3\cdot7^2\cdot11^3$\\\hline
\end{tabular}
}
\end{center}

\vspace*{5mm}
\begin{acknowledgment}
The authors would like to thank anonymous referees for reading the original manuscript very 
carefully and for suggesting improvements and corrections. 
\end{acknowledgment}


{\small
\hspace*{-0.5cm}\\
\begin{tabular}{ll}
Akinari HOSHI & Katsuya MIYAKE\\
Department of Mathematics & Department of Mathematics\\
Rikkyo University & School of Fundamental Science and Engineering\\
3--34--1 Nishi-Ikebukuro Toshima-ku & Waseda University\\
Tokyo, 171--8501, Japan & 3--4--1 Ohkubo Shinjuku-ku\\
E-mail: \texttt{hoshi@rikkyo.ac.jp} & Tokyo, 169--8555, Japan\\
& E-mail: \texttt{miyakek@aoni.waseda.jp}
\end{tabular}

\end{document}